\numberwithin{equation}{section}
\setlist{leftmargin=*,label={\rm(\arabic*)}}
\theoremstyle{plain}
\newtheorem{thm}{Theorem}[section]
\newtheorem{prop}[thm]{Proposition}
\newtheorem{cor}[thm]{Corollary}
\newtheorem{lem}[thm]{Lemma}
\theoremstyle{definition}
\newtheorem{exm}[thm]{Example}
\theoremstyle{remark}
\DeclareMathOperator{\Hom}{Hom}
\DeclareMathOperator{\Ext}{Ext}
\DeclareMathOperator{\Tor}{Tor}
\DeclareMathOperator{\Ker}{Ker}
\DeclareMathOperator{\Coker}{Coker}
\DeclareMathOperator{\rad}{rad}
\DeclareMathOperator{\Mon}{Mon}
\newcommand{\xto}{\xrightarrow}
\newcommand{\lto}{\longrightarrow}
\mathchardef\mhyphen="2D
\renewcommand{\-}{\mhyphen}
\newcommand{\bo}{\mathrm{b}}
\newcommand{\op}{\mathrm{op}}
\newcommand{\ev}{\mathrm{ev}}
\renewcommand{\mod}{\mathrm{mod}}
\newcommand{\proj}{\mathrm{proj}}
\newcommand{\Gproj}{\mathrm{Gproj}}
\newcommand{\uGproj}{\underline{\mathrm{Gproj}}}
\newcommand{\bbZ}{\mathbb{Z}}
\newcommand{\bfC}{\mathbf{C}}
\newcommand{\bfD}{\mathbf{D}}
\newcommand{\bfK}{\mathbf{K}}
\newcommand{\X}{\mathcal{X}}
\begin{document}
\title[A description of Gorenstein projective modules]
      {A description of Gorenstein projective modules over the tensor products of algebras}
\author[Dawei Shen]{Dawei Shen}
\address{Department of Mathematics \\
         Shanghai Key laboratory of PMMP \\
         East China Normal University \\
         Shanghai 200241 \\
         P. R. China}
\email{dwshen@math.ecnu.edu.cn}
\subjclass[2010]{Primary 16G10; Secondary 16D40}
\keywords{Gorenstein algebra, Gorenstein projective module}
\date{\today}

\begin{abstract}
Let $A$ be a coherent algebra
and $B$ be a finite-dimensional Gorenstein algebra over a field $k$.
We describe finitely presented Gorenstein projective
$A\otimes_k B$-modules in terms of their underlying onesided modules.
Moreover, if the  global dimension  of $B$ is finite,
we give a more precise description
of finitely presented Gorenstein projective $A\otimes_k B$-modules.
\end{abstract}
\maketitle

\section{Introduction} \label{sec:1}
The study of   Gorenstein projective modules
is originated by M.~Auslander and M.~Bridger in \cite{AB1969}
under the name  ``modules of G-dimension zero''.
The notion of Gorenstein projective modules
is introduced by E.~E.~Enochs and O.~M.~G.~Jenda  in \cite{EJ1995,EJ2000}
and is extensively studied  by their coauthors.
By the pioneering work of  R.-O.~Buchweitz in \cite{Buc1987},
the stable categories of Gorenstein projective modules
are closely related to singularity categories.

One of the most important tasks in Gorenstein homological algebra
is to describe Gorenstein projective modules.
In this paper,
we study Gorenstein projective modules
over the tensor product of two algebras
in terms of their underlying onesided modules.

Another motivation of this work is from  monomorphism categories.
G.~Birkhoff initiates the study of monomorphisms
between abelian groups \cite{Bir1935}.
C.~M.~Ringel and M.~Schmidmeier \cite{RS2006} investigate  submodule categories.
X.-H.~Luo and P.~Zhang \cite{LZ2013}  generalize their work
and introduce monomorphism categories over finite acyclic  quivers.

Let $A$ be a coherent algebra over a field $k$, and
let $Q$ be a finite acyclic quiver.
The monomorphism category $\Mon(Q, A)$
defined by  X.-H.~Luo and P.~Zhang is in fact
a full subcategory of the module category over $A\otimes_k kQ$.
They use this category to describe Gorenstein projective modules
over $A\otimes_k kQ$.
Our work is inspired by the main result of \cite{LZ2013}.

Let $A$ be a coherent algebra and
$B$ be a finite-dimensional algebra over a field $k$.
If $B$ is a Gorenstein algebra,
the following result describes
finitely presented Gorenstein projective $A\otimes_k B$-modules
in terms of their underlying onesided modules.
For each left $A\otimes_k B$-module $X$,
we denote by $_AX$  the underlying left $A$-module of $X$
and by $_BX$ the underlying left $B$-module of $X$.
Denote by $D$ the $k$-dual functor $\Hom_k(-,k)$.

\begin{thm} \label{thm:A}
Let $A$ be a coherent algebra
and $B$ be a finite-dimensional Gorenstein algebra over a field $k$.
Let $X$ be a finitely presented left $A\otimes_k B$-module.
Then  $X$ is Gorenstein projective
if and only if $_A(DB\otimes_B X)$ is a Gorenstein projective left $A$-module
and $_BX$ is a Gorenstein projective left $B$-module.
\end{thm}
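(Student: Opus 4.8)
The plan is to deduce everything from the description of Gorenstein projective objects as cocycles of totally acyclic complexes, transported between the three module categories. Write $\Lambda:=A\otimes_k B$; it is again coherent, ${}_A\Lambda\cong A^{\dim_k B}$ is free of finite rank and ${}_B\Lambda$ is free, so ${}_A(-)$ carries finitely generated projective $\Lambda$-modules to finitely generated projective $A$-modules and ${}_B(-)$ carries them to projective $B$-modules. The endofunctor $DB\otimes_B(-)$ of $\Lambda$-modules is $N\otimes_\Lambda(-)$ for the $\Lambda$-bimodule $N:=A\otimes_k DB$; here ${}_AN$ is finitely generated free, ${}_BN$ is injective, $\Hom_A({}_AN,A)\cong\Lambda$ as left $\Lambda$-modules, and — because $B$ is Gorenstein — $DB$ and hence $N$ have finite projective dimension on either side. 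I shall use freely that a finitely presented $\Lambda$-module is Gorenstein projective precisely when it is a cocycle of a totally acyclic complex of finitely generated projective $\Lambda$-modules, together with the analogous statements over $A$ and $B$.

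\emph{Necessity.} Let $\bfP$ be a totally acyclic complex of finitely generated projective $\Lambda$-modules with cocycle $X$. Restricting to $B$ gives an acyclic complex of projective $B$-modules with cocycle ${}_BX$, and applying $\Hom_B(-,B)$ together with the natural isomorphism $\Hom_B(M,B)\cong D(DB\otimes_B M)$ (valid since $DDB\cong B$) identifies $\Hom_B({}_B\bfP,B)$ with $D$ applied to ${}_B(N\otimes_\Lambda\bfP)$; the latter is acyclic because $N$ has finite projective dimension as a right $\Lambda$-module, so ${}_BX$ is Gorenstein projective over $B$. For the other condition, $N\otimes_\Lambda\bfP=DB\otimes_B\bfP$ is acyclic with cocycle $DB\otimes_B X$ and, after restriction to $A$, has finitely generated projective terms; the adjunction isomorphism $\Hom_A({}_A(N\otimes_\Lambda\bfP),A)\cong\Hom_\Lambda(\bfP,\Hom_A({}_AN,A))\cong\Hom_\Lambda(\bfP,\Lambda)$ shows it stays acyclic under $\Hom_A(-,A)$, so ${}_A(DB\otimes_B X)$ is Gorenstein projective over $A$.

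\emph{Sufficiency.} The starting observation is the natural isomorphism $\Hom_\Lambda(X,\Lambda)\cong\Hom_A({}_A(DB\otimes_B X),A)$, obtained from $\Lambda\cong\Hom_A({}_AN,A)$ together with the adjunction above. When ${}_BX$ is Gorenstein projective over $B$, a finitely generated projective resolution $\bfP\to X$ over $\Lambda$ restricts to a projective $B$-resolution ${}_B\bfP$ of ${}_BX$, so the homology of $N\otimes_\Lambda\bfP\cong DB\otimes_B{}_B\bfP$ is $\Tor^B_\bullet(DB,{}_BX)$, which vanishes in positive degrees because $D\Tor^B_i(DB,{}_BX)\cong\Ext^i_B({}_BX,B)=0$ for $i\geq1$. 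Hence ${}_A(N\otimes_\Lambda\bfP)$ is a resolution of ${}_A(DB\otimes_B X)$ by finitely generated projective $A$-modules, and applying $\Hom_A(-,A)$ and the displayed isomorphism yields $\Ext^i_\Lambda(X,\Lambda)\cong\Ext^i_A({}_A(DB\otimes_B X),A)$ for all $i$; in particular $\Ext^i_\Lambda(X,\Lambda)=0$ for $i\geq1$ under the hypotheses of the theorem. The same $\Tor$-vanishing makes $DB\otimes_B(-)$ exact on any short exact sequence of $\Lambda$-modules whose outer terms restrict to Gorenstein projective $B$-modules, whence (using that ${}_A\Lambda$ and ${}_B\Lambda$ are projective) the class of finitely presented $\Lambda$-modules $Y$ with ${}_A(DB\otimes_B Y)$ and ${}_BY$ Gorenstein projective is closed under $\Lambda$-syzygies.

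To finish, by the Auslander--Bridger theory it suffices to establish in addition the dual vanishing $\Ext^i_{\Lambda^{\op}}(\Tr_\Lambda X,\Lambda)=0$ for $i\geq1$, equivalently to build a $\Hom_\Lambda(-,\Lambda)$-exact coresolution of $X$ by finitely generated projective $\Lambda$-modules with cosyzygies again in the above class; splicing it with a finitely generated projective resolution of $X$ and using the $\Ext$-vanishing along all syzygies and cosyzygies then produces a totally acyclic complex with cocycle $X$. The natural route is to run the necessity computation over $\Lambda^{\op}=A^{\op}\otimes_k B^{\op}$ — note $B^{\op}$ is Gorenstein because $B$ has finite injective dimension on both sides — applied to $\Tr_\Lambda X$, reducing the claim to identifying the $B^{\op}$-restriction of $\Hom_\Lambda(X,\Lambda)$ (hence of $\Tr_\Lambda X$) with a Gorenstein projective $B^{\op}$-module built from ${}_BX$, and the $A^{\op}$-side with $\Hom_A({}_A(DB\otimes_B X),A)$, which is Gorenstein projective over $A^{\op}$. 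This last identification is the main obstacle: since $A$ is only coherent, the modules $DB\otimes_B X$ and $\Hom_\Lambda(X,\Lambda)$ may be infinite dimensional, so one cannot simply invoke $k$-duality and the one-sided structures must be matched by hand — and it is precisely here that the two-sided Gorenstein hypothesis on $B$ is used in full.
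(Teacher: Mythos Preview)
Your necessity argument is correct and rather cleaner than the paper's route: pushing the totally acyclic complex through ${}_B(-)$ and $DB\otimes_B(-)$ and invoking the adjunction $\Hom_A(DB\otimes_B(-),A)\cong\Hom_\Lambda(-,\Lambda)$ does exactly what is needed.

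The gap is in sufficiency. You obtain $\Ext^i_\Lambda(X,\Lambda)=0$ for $i\geq1$ and syzygy-closure of your class, but the final paragraph is an outline, not a proof. To get the remaining Auslander--Bridger conditions you propose to rerun the argument over $\Lambda^{\op}$ on $\Tr_\Lambda X$, which requires verifying that $X^*=\Hom_\Lambda(X,\Lambda)$ itself satisfies the hypotheses of the theorem over $\Lambda^{\op}$: that ${}_{B^{\op}}X^*$ is Gorenstein projective and that $D(B^{\op})\otimes_{B^{\op}}X^*$ is Gorenstein projective over $A^{\op}$. You then write ``this last identification is the main obstacle'' and stop. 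Neither identification is carried out, and the $A^{\op}$-side claim looks confused: your isomorphism $X^*\cong\Hom_A({}_A(DB\otimes_B X),A)$ describes $X^*$ itself, not $D(B^{\op})\otimes_{B^{\op}}X^*$, so the proposed match is not the one you need. Reflexivity $X\cong X^{**}$ is also never addressed.

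The paper closes exactly this gap with a different device. Because $B$ is Gorenstein, $A\otimes DB$ is a tilting $(A\otimes B)^{\op}$-module, and Miyashita's tilting theorems convert between the $\Lambda$-dual and the $(A\otimes DB)$-dual $(-)^\vee=\Hom_\Lambda(-,A\otimes DB)$. Concretely, the paper's Lemma~3.1 yields $\Ext^n_\Lambda(X,\Lambda)\cong\Ext^n_{\Lambda^{\op}}(A\otimes DB,X^\vee)$; feeding this into the tilting equivalence produces isomorphisms $\Ext^n_{\Lambda^{\op}}(X^\vee,A\otimes DB)\cong\Ext^n_{\Lambda^{\op}}(\Hom_\Lambda(X,\Lambda),\Lambda)$ and identifies the two evaluation maps. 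Since the $(-)^\vee$ side is controlled by the Gorenstein projectivity of ${}_AX$ (via the adjunction of Lemma~2.1), both the dual $\Ext$-vanishing and reflexivity for the $\Lambda$-dual follow at once. This tilting step is the missing ingredient your sketch gestures toward but does not supply.
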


If $B$ has finite global dimension,
we have  the following more precise description
of finitely presented Gorenstein projective $A\otimes_k B$-modules.
For each left $B$-module $M$,
recall that the radical $\rad_BM$ of $M$
is the intersection of all maximal submodules of $M$;
see, for example, \cite[Chapter~3, \S9]{AF1974}.

\begin{prop} \label{prop:B}
Let $A$ be a coherent algebra
and $B$ be a finite-dimensional algebra of finite global dimension
over a field $k$.
Let $X$ be a finitely presented left $A\otimes_k B$-module.
Then $X$ is Gorenstein projective
if and only if $_A(X/\rad_BX)$ is a Gorenstein projective left $A$-module
and $_BX$ is a projective left $B$-module.
\end{prop}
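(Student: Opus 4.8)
The plan is to deduce Proposition~\ref{prop:B} from Theorem~\ref{thm:A}. Since $B$ has finite global dimension it is a Gorenstein algebra, so Theorem~\ref{thm:A} applies; moreover over $B$ a finitely presented module is Gorenstein projective if and only if it is projective, because a Gorenstein projective module of finite projective dimension is projective. Thus Theorem~\ref{thm:A} already gives that $X$ is Gorenstein projective over $A\otimes_k B$ if and only if ${}_A(DB\otimes_B X)$ is Gorenstein projective over $A$ and ${}_BX$ is projective over $B$, and it remains to show that, \emph{assuming ${}_BX$ is projective}, the left $A$-module ${}_A(DB\otimes_B X)$ is Gorenstein projective if and only if ${}_A(X/\rad_BX)$ is.

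Fix a complete set $e_1,\dots,e_n$ of primitive orthogonal idempotents of $B$ and put $V_j:=e_j(X/\rad_BX)$; since the $e_j$ commute with $A$, each $V_j$ is a finitely presented left $A$-module and ${}_A(X/\rad_BX)=\bigoplus_{j=1}^n V_j$. Consider the contravariant functor $\Phi:=D(-)\otimes_B X$ from finite-dimensional left $B$-modules to finitely presented left $A$-modules; that the values are finitely presented over $A$ uses that $A\otimes_k B$ is coherent and that ${}_A(A\otimes_k B)$ is finitely generated free because $\dim_kB<\infty$. As $D=\Hom_k(-,k)$ is exact and ${}_BX$ is projective, hence flat, $\Phi$ turns short exact sequences of left $B$-modules into short exact sequences of left $A$-modules with the arrows reversed. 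Three observations will be used: (i) $\Phi(B)\cong{}_A(DB\otimes_B X)$; (ii) if $T$ is a semisimple left $B$-module then $\Phi(T)$ is a direct sum of copies of $V_1,\dots,V_n$, and in particular $\Phi(B/\rad B)\cong{}_A(X/\rad_BX)$; and (iii) if $P$ is a finitely generated projective left $B$-module then, as $D$ and $-\otimes_B X$ preserve direct summands, $\Phi(P)$ is a direct summand of a finite direct sum of copies of ${}_A(DB\otimes_B X)$, hence Gorenstein projective whenever ${}_A(DB\otimes_B X)$ is.

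Now suppose first that ${}_A(DB\otimes_B X)$ is Gorenstein projective. I claim $\Phi(M)$ is Gorenstein projective for every finite-dimensional left $B$-module $M$, arguing by induction on the (finite) projective dimension of $M$ over $B$. The case of projective $M$ is (iii). Otherwise choose $0\to\Omega M\to P\to M\to0$ with $P$ a projective cover; applying $\Phi$ gives $0\to\Phi(M)\to\Phi(P)\to\Phi(\Omega M)\to0$, in which $\Phi(P)$ and $\Phi(\Omega M)$ are Gorenstein projective by (iii) and the induction hypothesis, so $\Phi(M)$ is the kernel of an epimorphism between Gorenstein projective modules and therefore Gorenstein projective, since the Gorenstein projective modules form a resolving subcategory. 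Taking $M=B/\rad B$ and using (ii) shows ${}_A(X/\rad_BX)$ is Gorenstein projective. Conversely, suppose ${}_A(X/\rad_BX)$, hence each of its summands $V_j$, is Gorenstein projective. Applying the contravariant functor $\Phi$ to the radical filtration $B=\rad^{0}B\supseteq\rad^{1}B\supseteq\dots\supseteq\rad^{N}B=0$ produces a finite filtration of $\Phi(B)$ whose successive quotients are the modules $\Phi(\rad^{\ell}B/\rad^{\ell+1}B)$, each a direct sum of copies of the $V_j$ by (ii) and hence Gorenstein projective; since Gorenstein projective modules are closed under extensions, $\Phi(B)\cong{}_A(DB\otimes_B X)$ is Gorenstein projective. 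Together with Theorem~\ref{thm:A} this proves the proposition.

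The delicate point is the implication ``${}_A(DB\otimes_B X)$ Gorenstein projective $\Rightarrow$ ${}_A(X/\rad_BX)$ Gorenstein projective''. The obvious attempt is to compare the two modules through the exact sequence $0\to{}_A(X/\rad_BX)\to{}_A(DB\otimes_B X)\to\Phi(\rad B)\to0$ obtained by applying $\Phi$ to $0\to\rad B\to B\to B/\rad B\to0$; but this is circular, because $\Phi(\rad B)$ is again built out of the $V_j$ by (ii), so its Gorenstein projectivity is equivalent to the statement to be proved. Breaking the circle requires the inductive argument above, and the crucial input there is that the Gorenstein projective modules are closed under \emph{kernels of epimorphisms}, not merely under extensions. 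A secondary technical matter, flagged above, is to keep all modules finitely presented over $A$ so that the resolving-subcategory properties are available in $\mod A$.
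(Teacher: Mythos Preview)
Your proof is correct and takes a genuinely different route from the paper's. The paper does not reduce directly to Theorem~\ref{thm:A}; instead it works through the intermediate characterization~(2) of Theorem~\ref{thm-mainthm} (${}_AX$ Gorenstein projective together with $\Ext^{n}_{A\otimes B}(X,A\otimes B)=0$), uses Lemma~\ref{lem:fingldim} to replace $B$ by $D(B/\rad B)$ in the $\Ext$-condition, and then invokes the key Lemma~\ref{lem:key} to translate that $\Ext$-vanishing into the statement that ${}_A(X/\rad_BX)$ is Gorenstein projective and $\Tor^B_n(B/\rad B,X)=0$. In particular, in the forward direction the paper \emph{derives} the projectivity of ${}_BX$ from this $\Tor$-vanishing, rather than reading it off from Theorem~\ref{thm:A} as you do. Your argument bypasses Lemmas~\ref{lem:key} and~\ref{lem:fingldim} entirely: once ${}_BX$ is projective the contravariant functor $\Phi=D(-)\otimes_BX$ is exact, and you compare $\Phi(B)$ with $\Phi(B/\rad B)$ by induction on projective dimension (using closure of $A\text{-}\Gproj$ under kernels of epimorphisms) in one direction and by the radical filtration (using closure under extensions) in the other. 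This is more elementary and arguably cleaner. One point worth making explicit: the identification $\Phi(B/\rad B)\cong{}_A(X/\rad_BX)$ in your~(ii) amounts to $D(B/\rad B)\cong B/\rad B$ as right $B$-modules, which holds because the semisimple algebra $B/\rad B$ is Frobenius; alternatively, for the argument it suffices that every simple right $B$-module occurs in $D(B/\rad B)$, so that $\Phi(B/\rad B)$ and $X/\rad_BX$ both lie in $\add\{V_1,\dots,V_n\}$ with each $V_j$ appearing in both.
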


The paper is organized as follows.
In Section 2, we recall some facts about
the Gorenstein projective modules and
the tensor products of algebras.
The proofs of Theorem 1.1 and Proposition 1.2 are given in
Section 3 and Section 4, respectively.
In Section 5, we give some applications of our results.

\section{Preliminaries} \label{sec:2}
\subsection{Notation}
Throughout this paper,
$k$ is a fixed field.
Let $\otimes$ denote the tensor product over $k$,
and let $D$ denote the $k$-dual functor $\Hom_k(-,k)$.
For each $k$-algebra $A$,
we denote by $A^\op$ the  opposite algebra of $A$.
All modules are left modules;
all categories, morphisms and functors are $k$-linear.

\subsection{Tensor products of algebras}
Let $A$ be a $k$-algebra
and $B$ be a finite-dimensional $k$-algebra.
Recall from \cite[Chapter~IX, Theorem~2.8a]{CE1956} the following.

\begin{lem} \label{lem:adjoint}
Let $M$ be an $A$-module,
$P$ be a finite-dimensional projective $B^\op$-module
and $X$ be an $A\otimes B$-module.
Then for each $n\geq 0$, there is an isomorphism
\[\Ext^n_{A\otimes B}(X,M\otimes DP)\simeq \Ext^n_A(X\otimes_B P,M).\]
\end{lem}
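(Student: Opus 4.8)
The statement to prove is Lemma~\ref{lem:adjoint}: for $M$ an $A$-module, $P$ a finite-dimensional projective $B^\op$-module, and $X$ an $A\otimes B$-module, there is an isomorphism $\Ext^n_{A\otimes B}(X,M\otimes DP)\simeq \Ext^n_A(X\otimes_B P,M)$ for all $n\geq 0$. The plan is to deduce this from the $n=0$ case together with a dimension-shifting (balancing) argument, since the reference \cite[Chapter~IX, Theorem~2.8a]{CE1956} is precisely of this flavor. First I would establish the $\Hom$-level adjunction $\Hom_{A\otimes B}(X,M\otimes DP)\simeq \Hom_A(X\otimes_B P,M)$. The natural map goes as follows: the functor $(-)\otimes_B P\colon \Mod(A\otimes B)\to\Mod(A)$ (using the right $B$-action on $X$ coming from $B^\op$, and viewing $P$ as a left $B$-module via $B^\op$, so that $X\otimes_B P$ is naturally an $A$-module) is left adjoint to the functor $M\mapsto \Hom_B(P,M\,?)$; but since $P$ is finite-dimensional projective over $B^\op$, there is a natural isomorphism $\Hom_B(P,N)\simeq DP\otimes_B N$ for $B$-modules $N$, and more to the point $\Hom_k(-,M)$ applied appropriately gives $M\otimes DP$ the structure of an $A\otimes B$-module that represents the right adjoint. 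Concretely, the adjunction isomorphism is the composite
\[
\Hom_{A\otimes B}(X,M\otimes DP)\;\simeq\;\Hom_{A\otimes B}(X,\Hom_k(P,M))\;\simeq\;\Hom_{A}(X\otimes_B P,M),
\]
where the first isomorphism uses $M\otimes DP\simeq \Hom_k(P,M)$ as $A\otimes B$-modules (valid because $P$ is finite-dimensional), and the second is the standard tensor–hom adjunction, one checks that the $B$-linearity on the left translates exactly into the tensor-over-$B$ on the right. Naturality in all three variables should be immediate from naturality of the two constituent isomorphisms.

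The second step is to upgrade from $\Hom$ to $\Ext$. Here I would use a projective resolution $P_\bullet\to X$ of $X$ as an $A\otimes B$-module. Because $P$ is projective as a $B^\op$-module, the functor $(-)\otimes_B P$ is exact on $\Mod(A\otimes B)$, and it sends projective $A\otimes B$-modules to projective $A$-modules, indeed $(A\otimes B)\otimes_B P\simeq A\otimes P$ as $A$-modules, and $A\otimes P$ is a direct summand of a finite direct sum of copies of $A$ since $P$ is a summand of $B^{\oplus m}$. Therefore $P_\bullet\otimes_B P\to X\otimes_B P$ is a projective resolution of $X\otimes_B P$ over $A$. Applying $\Hom_{A\otimes B}(-,M\otimes DP)$ to $P_\bullet$ and $\Hom_A(-,M)$ to $P_\bullet\otimes_B P$ yields two cochain complexes that are isomorphic term by term, and compatibly with differentials, by the natural $\Hom$-level isomorphism from the first step. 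Taking cohomology in degree $n$ gives the desired isomorphism $\Ext^n_{A\otimes B}(X,M\otimes DP)\simeq\Ext^n_A(X\otimes_B P,M)$.

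**Main obstacle.** I expect the only real subtlety to be bookkeeping of module structures, making sure all the actions (left $A$, the $B$-action on $X$ used for $\otimes_B$, the $B^\op$-structure on $P$, and the resulting $A\otimes B$-structure on $M\otimes DP$) are consistently chosen so that the $\Hom$-adjunction is genuinely $A$-linear and $B$-linear on the correct sides. In particular one must verify that $M\otimes DP\simeq\Hom_k(P,M)$ respects the $A\otimes B$-module structures, which is where finite-dimensionality of $P$ is essential, and that the exactness of $(-)\otimes_B P$ together with its sending projectives to projectives is used correctly to conclude $P_\bullet\otimes_B P$ is a resolution. None of this is deep, but it is the step most prone to sign-of-the-action errors, so I would state the module conventions explicitly at the outset. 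Everything else is the routine balancing of $\Ext$ via a projective resolution in the first variable.
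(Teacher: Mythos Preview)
Your argument is correct and is precisely the standard proof: establish the $n=0$ case via the identification $M\otimes DP\simeq\Hom_k(P,M)$ combined with tensor--hom adjunction, then pass to higher $\Ext$ by observing that $(-)\otimes_B P$ is exact and preserves projectives, so a projective resolution of $X$ over $A\otimes B$ yields one of $X\otimes_B P$ over $A$. The paper itself does not give a proof of this lemma at all; it simply cites \cite[Chapter~IX, Theorem~2.8a]{CE1956}, whose proof is exactly the adjunction-plus-resolution argument you have outlined. Your anticipated obstacle (tracking the side of the $B$-actions so that $X\otimes_B P$ is well defined and the adjunction is $A\otimes B$-linear) is indeed the only point requiring care, and you have identified it correctly.
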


\subsection{Gorenstein projective modules}
Let $A$ be a $k$-algebra.
Following \cite{Buc1987,EJ2000},
a complex of projective $A$-modules
\[P^{\bullet}:\cdots\lto P^{-1} \overset{d^{-1}}\lto P^0\overset{d^0}\lto P^1 \overset{d^{1}}\lto \cdots\]
is {\em totally acyclic}
if $P^{\bullet}$ is acyclic and $\Hom_A(P^\bullet,Q)$ is acyclic for each projective $A$-module $Q$.
An $A$-module $M$ is {\em Gorenstein projective} if there is
a totally acyclic complex $P^{\bullet}$ of projective $A$-modules
such that $M=\Ker d^1$;
the complex $P^\bullet$ is called a {\em complete resolution} of $M$.

From now on, let $A$ be a {\em coherent} $k$-algebra, that is,
a left and right coherent  algebra over $k$.
Denote by $A\-\mod$ the category of finitely presented $A$-modules;
it is an abelian category.
Denote by $A\-\proj$ the full subcategory of $A\-\mod$ formed by finitely generated
projective $A$-modules.
For each Gorenstein projective $A$-module $M$ which is finitely presented,
there is a complete resolution $P^\bullet$ of $M$
such that $P^\bullet$ is a totally acyclic complex of finitely generated
projective $A$-modules;
it is called a complete resolution of $M$ in $A\-\proj$.
For be a complex of finitely generated projective $A$-modules $P^\bullet$,
recall that $P^\bullet$  is totally acyclic if and only if
$P^{\bullet}$ and $\Hom_A(P^\bullet,A)$ are both acyclic.

Denote by $(-)^*$  the $A$-dual functor $\Hom_A(-,A)$ or $\Hom_{A^\op}(-,A)$.
For each $A$-module $M$,
there is an {\em evaluation map} $\ev^A_M: M \to M^{**}$ of $M$ with value in $A$,
where $\ev^A_M(m)(f)=f(m)$ for each $m\in M$ and $f\in M^*$.
Following \cite[Chapter~X, Proposition~10.2.6]{EJ2000},
 a finitely presented $A$-module $M$ is Gorenstein projective
if and only if $M$ satisfies the following conditions:
\begin{enumerate}
  \item $\Ext^n_A(M,A)=0$ for $n\geq 1$;
  \item $\Ext^n_{A^\op}(M^*,A)=0$ for $n\geq 1$;
  \item the evaluation map $\ev^A_M:M \to M^{**}$ is an isomorphism.
\end{enumerate}

Denote by $A\-\Gproj$ the full subcategory of $A\-\mod$ formed by
finitely presented Gorenstein projective $A$-modules.
We will need the following facts about Gorenstein projective $A$-modules;
see, for example, \cite[Lemma 4.2.2]{Buc1987}.
\begin{enumerate}[leftmargin=3em]
  \item[(GP1)] $A\-\Gproj$ is a Frobenius category with projective objects  precisely $A\-\proj$.
  \item[(GP2)] $A\-\Gproj$ is closed under direct summands,
               extensions and kernels of epimorphisms in $A\-\mod$.
  \item[(GP3)] The $A$-dual functors induce exact dualities between $A\-\Gproj$ and $A^\op\-\Gproj$.
\end{enumerate}

Recall that $A\-\Gproj^{\perp}$ contains all finitely presented $A$-modules
of finite projective dimension.
Here, $A\-\Gproj^{\perp}$ denotes the {\em right perpendicular category}
\[A\-\Gproj^{\perp}=\{M\in A\-\mod \mid
\Ext^n_A(G,M)=0, \forall\; G \in A\-\Gproj, \forall\; n \geq1\}\]
of $A\-\Gproj$.
In fact, the full subcategory $A\-\Gproj^{\perp}$ is closed under cokernels
of monomorphisms in $A\-\mod$  and contains $A\-\proj$.

The following lemma is well known.

\begin{lem} \label{lem:excok}
Let $\xi: 0\to M_m \to \cdots \to M_1 \xto{\partial} M_0$
be a sequence of finitely presented Gorenstein projective $A$-modules with $m\geq 1$.
Then the following statements are equivalent.
\begin{enumerate}
  \item $\xi^{*}=\Hom_A(\xi,A)$ is exact.
  \item  $\xi$ is exact and $\Coker{\partial}$ is a Gorenstein projective $A$-module.
\end{enumerate}
\end{lem}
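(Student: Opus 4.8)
The plan is to prove the equivalence by interpreting both conditions in terms of $A$-duals and reducing to the characterization (GP2) that $A\-\Gproj$ is closed under kernels of epimorphisms.

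First I would treat the implication $(2)\Rightarrow(1)$. Suppose $\xi$ is exact with $C := \Coker\partial \in A\-\Gproj$. Then I splice $\xi$ with $C$ to obtain an exact sequence $0 \to M_m \to \cdots \to M_1 \xto{\partial} M_0 \to C \to 0$ of finitely presented Gorenstein projective modules. By (GP3) the functor $(-)^* = \Hom_A(-,A)$ is an exact duality between $A\-\Gproj$ and $A^\op\-\Gproj$; applying it to this exact sequence—which I would break into short exact sequences with syzygy terms, each of which again lies in $A\-\Gproj$ by (GP2)—yields that the dual sequence $0 \to C^* \to M_0^* \to M_1^* \to \cdots \to M_m^* \to 0$ is exact. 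Truncating off the term $C^*$ gives exactly that $\xi^* = \Hom_A(\xi,A)$ is exact, which is (1). Here the only thing to check carefully is that $\Ext^1_A(C,A) = 0$ so that no extra cokernel term obstructs exactness of the dual on the right end, but that is immediate from condition (1) in the characterization of Gorenstein projectivity applied to $C$.

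Next, for $(1)\Rightarrow(2)$, assume $\xi^*$ is exact, i.e. $0 \to M_0^* \xto{\partial^*} M_1^* \to \cdots \to M_m^* \to 0$ is exact. Since each $M_i$ is Gorenstein projective, $M_i^* \in A^\op\-\Gproj$ by (GP3). Let $N := \Coker(\partial^*: M_0^* \to M_1^*)$; viewing $\xi^*$ as a sequence with $N$ spliced in, it becomes a complete resolution-type piece, and by induction on $m$ (using that $A^\op\-\Gproj$ is closed under kernels of epimorphisms, (GP2)) I conclude $N \in A^\op\-\Gproj$ and the whole sequence $0 \to M_0^* \to M_1^* \to \cdots \to M_m^* \to 0$ together with $N$ is exact. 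Now apply $(-)^*$ again. Using that the evaluation maps $\ev^A_{M_i}$ are isomorphisms (condition (3) in the Gorenstein characterization) and that $(-)^*$ is an exact duality on Gorenstein projectives, the double dual of $\xi^*$ recovers $\xi$ up to the canonical identification, so $\xi$ is exact; moreover $\Coker\partial$ is identified with $N^*$, which lies in $A\-\Gproj$ by (GP3). That gives (2).

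The main obstacle is bookkeeping at the right-hand end of the sequences: $\xi$ is not assumed exact a priori in (1), only its dual is, so I cannot simply "dualize an exact sequence" but must build the Gorenstein projective cokernel from the dual side and then dualize back, carefully tracking that no spurious homology or cokernel terms appear and that the canonical evaluation isomorphisms glue the two dualizations into the identity on $\xi$. The inductive step on $m$—peeling off one term at a time and invoking closure of $A^\op\-\Gproj$ under kernels of epimorphisms—is where this is cleanly organized; the base case $m=1$ is essentially the statement that for $\partial: M_1 \to M_0$ in $A\-\Gproj$, $\partial^*$ surjective forces $\partial$ injective with Gorenstein projective cokernel, which follows directly from (GP2) and (GP3).
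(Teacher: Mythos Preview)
Your overall strategy matches the paper's: use (GP2) to produce a Gorenstein projective module attached to $\xi^*$, then apply the exact duality (GP3) to dualize back and recover $\xi$ together with a Gorenstein projective $\Coker\partial$. The direction $(2)\Rightarrow(1)$ is fine.

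In the direction $(1)\Rightarrow(2)$, however, there is a genuine bookkeeping error. The dual sequence $\xi^*$ is $M_0^* \xto{\partial^*} M_1^* \to \cdots \to M_m^* \to 0$, with \emph{no} leading zero: the original $\xi$ has nothing to the right of $M_0$, so $\xi^*$ has nothing to the left of $M_0^*$, and exactness of $\xi^*$ says nothing about injectivity of $\partial^*$. In fact $\partial^*$ is typically not injective, and its kernel is exactly the object that dualizes to $\Coker\partial$. Consequently the module to single out is $\Ker\partial^*$, not $\Coker\partial^*$. With your choice $N=\Coker\partial^*$, dualizing the right-exact sequence $M_0^*\to M_1^*\to N\to 0$ gives $0\to N^*\to M_1\to M_0$, so $N^*\cong \Ker\partial$, not $\Coker\partial$ as you claim. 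The paper instead takes $N=\Ker\partial^*$, shows it lies in $A^\op\-\Gproj$ by repeatedly applying (GP2) to the short exact sequences obtained by splitting $\xi^*$ from the right, and then applies (GP3) to $0\to N\to M_0^*\to\cdots\to M_m^*\to 0$ to conclude that $\xi\simeq(\xi^*)^*$ is exact with $\Coker\partial\cong N^*$ Gorenstein projective. Your own base case $m=1$ is argued correctly and already uses $\Ker\partial^*$ implicitly; the general step just needs to be brought in line with it.
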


\begin{proof}
``$(1)\Longrightarrow$ (2)"
Since $\xi^{*}$ is exact,
$\Ker\partial^{*}$ is a
finitely presented Gorenstein projective  $A^\op$-module by (GP2).
It follows from (GP3) that  $\xi\simeq (\xi^{*})^{*}$
is exact and $\Coker\partial\simeq (\Ker\partial^{*})^{*}$
is a finitely presented Gorenstein projective  $A^\op$-module.

``$(2)\Longrightarrow (1)$" This follows from (GP3).
\end{proof}

\section{The proof of Theorem~1.1} \label{sec:3}
Let $A$ be a coherent $k$-algebra
and $B$ be a finite-dimensional $k$-algebra.
Then the tensor product $A\otimes B$ of $A$ and $B$
is  a coherent $k$-algebra.
Let $X$ be an $A\otimes B$-module $X$.
Recall that $_AX$ denotes the underlying $A$-module of $X$
and $_BX$ denotes the underlying $B$-module of $X$.
Let $X$ be a finitely presented $A\otimes B$-module.
Then  $_AX$ is  finitely presented, but
$_BX$ is not necessarily finitely presented.
By the equivalent conditions for Gorenstein projective modules and
Lemma \ref{lem:adjoint}, we know that  $_AX$ is Gorenstein projective
if and only if $X$ satisfies the following conditions:
\begin{enumerate}
  \item $\Ext^n_{A\otimes B}(X,A\otimes DB)=0$ for $n\geq 1$;
  \item $\Ext^n_{(A\otimes B)^\op}(X^{\vee},A\otimes DB)=0$ for $n\geq 1$;
  \item the evaluation map $\ev^{A\otimes DB}_X:X\to X^{\vee\vee}$ is an isomorphism.
\end{enumerate}
Here, we denote $(-)^{\vee}=\Hom_{A\otimes B}(-,A\otimes DB)$
or $\Hom_{(A\otimes B)^\op}(-,A\otimes DB)$.

The following result indicates that the duality
between the categories of finite-dimensional modules
over $B$ and $B^\op$ induces some special ``duality''
between the categories of modules over $A\otimes B$ and $(A\otimes B)^\op$.

\begin{lem} \label{lem:hatdual}
Let $U$ be a finite-dimensional $B$-module,
and let $X$ be an $A\otimes B$-module such that
$\Ext^n_{A\otimes B}(X,A\otimes DB)=0$  for $n\geq 1$.
Then for each $n\geq 0$,
there is an isomorphism
\[\Ext^n_{A\otimes B}(X,A\otimes U)\simeq
\Ext^n_{(A\otimes B)^\op}(A\otimes DU, X^{\vee}).\]
\end{lem}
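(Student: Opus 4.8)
The plan is to reduce the general statement to the case where $U$ is a projective $B$-module, where it becomes an instance of Lemma~\ref{lem:adjoint}, and then to use a finite projective resolution of $U$ over $B$ together with the vanishing hypothesis on $X$ to bridge between the two cases via dimension shift.

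\emph{Step 1: the projective case.} First I would observe that when $U = P$ is a finite-dimensional projective $B^{\op}$-module—equivalently, writing things so that $U$ plays the role of $P$ in Lemma~\ref{lem:adjoint}—both sides are controlled directly. Indeed $A \otimes DP$ is a projective $(A \otimes B)^{\op}$-module, so $\Ext^n_{(A\otimes B)^{\op}}(A \otimes DU, X^{\vee}) = 0$ for $n \geq 1$ and equals $\Hom_{(A \otimes B)^{\op}}(A \otimes DP, X^{\vee})$ for $n = 0$; this in turn is naturally isomorphic to $X \otimes_B P$ suitably interpreted, or more directly, by Lemma~\ref{lem:adjoint} applied with $M = A$ we get $\Ext^n_{A \otimes B}(X, A \otimes DDP) \simeq \Ext^n_{A\otimes B}(X, A \otimes P)$, since $DDP \simeq P$ for finite-dimensional $P$. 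So for projective $U$ the asserted isomorphism holds, and one checks it is natural in $U$.

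\emph{Step 2: dimension shift.} For general finite-dimensional $U$, since $B$ is finite-dimensional I would take a (not necessarily finite) projective resolution—but actually, to get exactness of $\Hom$ against $X^\vee$ I want to be careful about which side. The cleanest route: take a short exact sequence $0 \to V \to P \to U \to 0$ of finite-dimensional $B$-modules with $P$ projective. Applying $A \otimes (-)$ (which is exact since we are over a field) gives $0 \to A\otimes V \to A \otimes P \to A \otimes U \to 0$ of $A \otimes B$-modules. The associated long exact sequence for $\Ext^\bullet_{A\otimes B}(X, -)$, combined with the vanishing $\Ext^n_{A\otimes B}(X, A \otimes DB) = 0$ for $n \geq 1$—which via $A \otimes P$ being a summand of a sum of copies of $A \otimes DB$ gives $\Ext^n_{A\otimes B}(X, A \otimes P) = 0$ for $n \geq 1$—yields $\Ext^n_{A\otimes B}(X, A\otimes U) \simeq \Ext^{n+1}_{A\otimes B}(X, A \otimes V)$ for $n \geq 1$, and a cokernel description for $n = 0$. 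Dually, applying $D$ to $0 \to V \to P \to U \to 0$ gives $0 \to DU \to DP \to DV \to 0$ over $B^{\op}$, hence $0 \to A \otimes DU \to A \otimes DP \to A \otimes DV \to 0$ over $(A\otimes B)^{\op}$, and the long exact sequence for $\Ext^\bullet_{(A\otimes B)^{\op}}(-, X^{\vee})$ with $A \otimes DP$ projective gives the matching shift $\Ext^n_{(A\otimes B)^{\op}}(A \otimes DU, X^\vee) \simeq \Ext^{n+1}_{(A\otimes B)^{\op}}(A \otimes DV, X^\vee)$ for $n \geq 1$ and a kernel description for $n = 0$. Iterating, since $B$ is finite-dimensional every finite-dimensional module has a finite-length composition series, but to terminate the induction I would instead induct on the length of $U$: the base case $U = 0$ is trivial, and for the inductive step combine the two long exact sequences above with the naturality of the isomorphism, using the five lemma to propagate the isomorphism from $V$ and $P$ to $U$ in each degree.

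\emph{Main obstacle.} I expect the delicate point to be getting the naturality and compatibility of the two connecting maps exactly right so that the five lemma applies—that is, checking that the isomorphism established in Step~1 for projectives, when fed into the two long exact sequences, commutes with the connecting homomorphisms on both sides. This is essentially bookkeeping with the standard adjunction $\Hom_{A\otimes B}(X, A \otimes -) $ versus $\Hom_{(A \otimes B)^{\op}}(A \otimes D(-), X^\vee)$, but one must verify the relevant square commutes; one clean way is to set up the isomorphism of Lemma~\ref{lem:adjoint} (or its $n=0$ form) as a natural transformation of $\delta$-functors on finite-dimensional $B$-modules and invoke a uniqueness-of-derived-functors argument, rather than chasing elements. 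A secondary point worth care is confirming $\Ext^n_{A\otimes B}(X, A \otimes P) = 0$ for finite-dimensional projective $P$ from the single hypothesis on $A \otimes DB$; this follows because $DB$ is a projective generator for $B^{\op}$-modules so $B \simeq DDB$ is a summand of $(DB)^{\oplus m}$ as a $B$-module, hence $A \otimes P$ is a summand of $(A \otimes DB)^{\oplus N}$ as an $A \otimes B$-module, and $\Ext$ commutes with finite direct sums and is additive on summands.
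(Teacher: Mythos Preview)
Your overall strategy---establish the degree-zero isomorphism, then dimension-shift---matches the paper's, but the execution has a genuine gap in the choice of resolution.

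The key error is the claim at the end: ``$DB$ is a projective generator for $B^{\op}$-modules so $B \simeq DDB$ is a summand of $(DB)^{\oplus m}$ as a $B$-module.'' This is false. As a left $B$-module, $DB$ is the injective cogenerator, not a projective generator; the statement that $B$ embeds as a summand of $(DB)^{\oplus m}$ would force $B$ to be self-injective, which is not assumed. Consequently you cannot conclude $\Ext^n_{A\otimes B}(X, A\otimes P)=0$ for projective $P$ from the hypothesis $\Ext^n_{A\otimes B}(X, A\otimes DB)=0$, and your long exact sequence for $0\to V\to P\to U\to 0$ does not collapse as claimed. The same confusion appears in Step~1, where you treat ``$U$ projective'' as the base case making $A\otimes DU$ projective over $(A\otimes B)^{\op}$: that happens exactly when $DU$ is projective over $B^{\op}$, i.e.\ when $U$ is \emph{injective} over $B$.

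The fix, which is what the paper does, is to run the argument with an injective copresentation $0\to U\to I\to V\to 0$. Then $A\otimes I$ is a summand of a finite sum of copies of $A\otimes DB$, so the hypothesis gives $\Ext^n_{A\otimes B}(X,A\otimes I)=0$ for $n\ge 1$; on the other side $DI$ is projective over $B^{\op}$, so $A\otimes DI$ is projective over $(A\otimes B)^{\op}$ and $\Ext^n_{(A\otimes B)^{\op}}(A\otimes DI,X^{\vee})=0$. The paper first establishes the $n=0$ isomorphism for \emph{all} $U$ directly (via $(A\otimes U)^{\vee}\simeq A\otimes DU$), then compares the two resulting four-term exact sequences to get $n=1$, and finally inducts on $n$ by dimension shift. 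Note also that your proposed induction on the length of $U$ would not terminate: in $0\to V\to P\to U\to 0$ the kernel $V$ need not have smaller length than $U$. Inducting on $n$, as the paper does, avoids this.
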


\begin{proof}
We have $(A\otimes U)^{\vee} \simeq A\otimes DU$ and
$(A\otimes DU)^{\vee} \simeq A\otimes DDU$ by \cite[Chapter~XI, Theorem~3.1]{CE1956}.
Since $U$ is finite dimensional over $k$,
$A\otimes DDU$ and $A\otimes U$ are natural isomorphic.
Then by \cite[Chapter~5, Proposition~20.7]{AF1974},
there is an isomorphism
\[\Hom_{A\otimes B}(X,A\otimes U){\simeq} \Hom_{(A\otimes B)^\op}(A\otimes DU, X^{\vee}).\]

Take an exact sequence of finite-dimensional $A$-modules
\begin{equation} \label{eq:seq1}
0 \lto U \to I \overset{f}\lto V \lto 0
\end{equation}
such that $I$ is injective.
Applying $\Hom_{A\otimes B}(X,A\otimes-)$ to \eqref{eq:seq1}
gives rise to an exact sequence
\begin{equation} \label{eq:seq2}
\begin{split}
    &\Hom_{A\otimes B}(X,A\otimes I) \overset{\Hom_{A\otimes B}(X,A\otimes f)}
    \lto \Hom_{A\otimes B}(X,A\otimes V) \\ &\lto
    \Ext^1_{A\otimes B}(X,A\otimes U) \lto \Ext^1_{A\otimes B}(X,A\otimes I).
\end{split}
\end{equation}
Since $I$ is an injective $B$-module, we have $\Ext^1_{A\otimes B}(X,A\otimes I)=0$.

Applying the $k$-dual functor $D$ to \eqref{eq:seq1},
we obtain an exact sequence
\begin{equation} \label{eq:seq3}
    0 \lto DV \overset{Df} \lto DI \lto DU \lto 0.
\end{equation}
Applying $\Hom_{(A\otimes B)^\op}(A\otimes-,X^{\vee})$ to \eqref{eq:seq3} yields an exact sequence
\begin{equation} \label{eq:seq4}
\begin{split}
    &\Hom_{(A\otimes B)^\op}(A\otimes DI,X^\vee)
    \overset{\Hom_{(A\otimes B)^\op}(A\otimes Df,X^\vee)}
    \lto \Hom_{(A\otimes B)^\op}(A\otimes DV,X^\vee)  \\
    &\lto\Ext^1_{(A\otimes B)^\op}(A\otimes DU,X^\vee)
    \lto \Ext^1_{(A\otimes B)^\op}(A\otimes DI,X^\vee).
\end{split}
\end{equation}
Since $A\otimes DI$ is projective, we have
$\Ext^1_{(A\otimes B)^\op}(A\otimes DI,X^\vee)=0$.

Compare the sequences~\eqref{eq:seq2} and~\eqref{eq:seq4}.
Then there is an isomorphism
\[\Ext^1_{A\otimes B}(X,A\otimes U)\simeq
\Ext^1_{(A\otimes B)^\op}(A\otimes DU, X^{\vee}).\]

By dimension shifting,
we can prove the isomorphisms for $n \geq 2$.
\end{proof}

Let $U$ be a finite-dimensional $B$-module.
Assume that the injective dimension $m$ of $U$ is finite.
Then there is an exact sequence of finite-dimensional $B$-modules
\[\xi: I^0 \overset{d^0}\lto I^1 \lto \cdots \lto  I^m  \lto 0\]
such that each $I^i$ is injective and $\Ker{d^0}=U$.

\begin{lem} \label{lem:key}
Let $U$ be a finite-dimensional $B$-module of finite injective dimension,
and let $X$ be a  finitely presented $A\otimes B$-module with
$_AX$  Gorenstein projective.
Then the following statements are equivalent.
\begin{enumerate}
  \item $\Ext^n_{A\otimes B}(X,A\otimes U)=0$  for $n\geq 1$.
  \item $\Hom_{A\otimes B}(X,A\otimes \xi)$ is exact.
  \item $DU\otimes_BX$ is a Gorenstein projective $A$-module and
  $\Tor^B_n(DU,X)=0$ for  $n\geq 1$.
\end{enumerate}
\end{lem}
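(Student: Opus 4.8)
The plan is to prove the three statements equivalent by establishing $(1)\Leftrightarrow(2)$ first and then $(1)\Leftrightarrow(3)$, using Lemma~\ref{lem:hatdual} as the crucial bridge between the $A\otimes B$-side and the $(A\otimes B)^\op$-side. Note that hypothesis $(1)$ for $n\geq 1$ in particular gives $\Ext^n_{A\otimes B}(X,A\otimes DB)=0$ for $n\geq 1$ once we know that each injective $B$-module $I^i$ is a direct summand of a finite direct sum of copies of $DB$; so Lemma~\ref{lem:hatdual} is available whenever we need it, provided we argue in the right order. Alternatively, each $I^i$ has finite injective dimension trivially, and $\Ext^n_{A\otimes B}(X,A\otimes DB)=0$ will follow once we have $(2)$ since $DB$ is injective. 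I would therefore begin with $(1)\Leftrightarrow(2)$, which is a standard consequence of dimension shifting along $\xi$: breaking $\xi$ into short exact sequences $0\to U\to I^0\to Z^1\to 0$, $0\to Z^1\to I^1\to Z^2\to 0$, etc., applying $\Hom_{A\otimes B}(X,A\otimes-)$, and using that $\Ext^{\geq 1}_{A\otimes B}(X,A\otimes I^i)=0$ because $A\otimes I^i$ is a summand of $(A\otimes DB)^{\oplus r}$ and the latter has no higher self-Ext against $X$ — wait, that last vanishing is exactly $(1)$ applied to $U=DB$, which we do not yet have. The cleaner route: $\Hom_{A\otimes B}(X,A\otimes\xi)$ exact is equivalent, by the usual hyper-Ext/truncation argument, to $\Ext^n_{A\otimes B}(X,A\otimes U)=0$ for $1\leq n\leq m$ \emph{together with} $\Ext^n_{A\otimes B}(X,A\otimes I^i)=0$ for $n\geq 1$; and the latter holds for all $i$ by induction once we know it for $DB$. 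So the honest first step is: show $\Ext^n_{A\otimes B}(X,A\otimes DB)=0$ for $n\geq 1$ is a \emph{consequence} of any one of $(1)$, $(2)$, $(3)$ — under $(1)$ take $U=DB$ (if $DB$ has finite injective dimension, i.e.\ $B$ Gorenstein; but here $B$ is arbitrary, so one instead observes $\Ext^n_{A\otimes B}(X,A\otimes DB)\simeq\Ext^n_A(X\otimes_B B,A)=\Ext^n_A({}_AX,A)=0$ directly by Lemma~\ref{lem:adjoint}, since $_AX$ is Gorenstein projective!). This is the key simplification: the hypothesis $_AX$ Gorenstein projective \emph{already} forces $\Ext^{\geq 1}_{A\otimes B}(X,A\otimes DB)=0$, so Lemma~\ref{lem:hatdual} applies unconditionally here.

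Granting that, the structure is clean. For $(1)\Leftrightarrow(2)$: apply $\Hom_{A\otimes B}(X,A\otimes-)$ to the short exact sequences extracted from $\xi$; since $\Ext^{\geq 1}_{A\otimes B}(X,A\otimes I^i)=0$ for every $i$ (each $I^i$ is a summand of a finite sum of copies of $DB$, and the $DB$ case is the Lemma~\ref{lem:adjoint} computation above), dimension shifting identifies $\Ext^n_{A\otimes B}(X,A\otimes U)$ with the cohomology of $\Hom_{A\otimes B}(X,A\otimes\xi)$ in a range that sees all of it, giving the equivalence.

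For $(1)\Leftrightarrow(3)$: apply Lemma~\ref{lem:hatdual} with the given $U$ (legitimate, as just noted) to get $\Ext^n_{A\otimes B}(X,A\otimes U)\simeq\Ext^n_{(A\otimes B)^\op}(A\otimes DU,X^\vee)$ for all $n\geq 0$. Now use Lemma~\ref{lem:adjoint} on the $(A\otimes B)^\op$-side — here the "finite-dimensional projective $B^\op$-module" role is played by a projective cover / in a resolution of $DU$ as a $B^\op$-module — to rewrite $\Ext^n_{(A\otimes B)^\op}(A\otimes DU,X^\vee)$ in terms of $\Ext^n_{A^\op}(\,\cdot\, ,A)$ evaluated on modules built from $DU\otimes_B X$ and the higher $\Tor^B_n(DU,X)$; more directly, take a projective resolution $P_\bullet\to DU$ over $B^\op$ (finite-dimensional projectives, possible since $DU$ is finite-dimensional), tensor $-\otimes_B X$, and compare the hyperhomology spectral sequence with the condition. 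The upshot one wants: $\Ext^{\geq 1}_{A\otimes B}(X,A\otimes U)=0$ forces both $\Tor^B_{\geq 1}(DU,X)=0$ (from the terms where the $A$-dual contributes nothing, i.e.\ the Gorenstein-projectivity of $_AX$ kills the $\Ext^{\geq 1}_A$ pieces) and, once those $\Tor$'s vanish so that $DU\otimes_B X = D U\otimes^{\mathbf L}_B X$, the vanishing $\Ext^{\geq 1}_A(DU\otimes_B X, A)=0$; symmetrically one checks $\Ext^{\geq 1}_{A^\op}((DU\otimes_B X)^*,A)=0$ and that the evaluation map is an isomorphism, so $DU\otimes_B X$ is Gorenstein projective by the criterion (1)--(3) recalled before Lemma~\ref{lem:excok}. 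Conversely, if $(3)$ holds, the same spectral sequence / resolution argument runs backwards to give $(1)$.

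The main obstacle, I expect, is the bookkeeping in $(1)\Leftrightarrow(3)$: one must juggle a $B^\op$-projective resolution of $DU$, the functor $-\otimes_B X$, the $A$-dual, and the identification $X^\vee$ with something like $D B\otimes^{\mathbf L}_B(\text{dual of }{}_AX)$, keeping track of which vanishing ($\Tor$ over $B$ versus $\Ext$ over $A$) is responsible for each graded piece. The cleanest way to organize this is probably to first dispose of the $\Tor^B_n(DU,X)=0$ claim by a separate short argument (e.g.\ applying Lemma~\ref{lem:hatdual} and Lemma~\ref{lem:adjoint} to the successive syzygies of $DU$ over $B^\op$ and using that $_AX$ is Gorenstein projective, hence $\Ext^{\geq 1}_A({}_AX,A)=0$), and only then, with $DU\otimes_B X$ known to compute the derived tensor product, verify the three Gorenstein-projectivity conditions for it one at a time — conditions (1) and (2) via Lemma~\ref{lem:adjoint} applied over $A$ and $A^\op$ respectively, and condition (3) by a direct compatibility check of evaluation maps. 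I would lean on Lemma~\ref{lem:excok} wherever a sequence of Gorenstein projectives needs to be recognized as exact with Gorenstein projective cokernel, which should streamline the final verification.
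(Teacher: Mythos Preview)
Your $(1)\Leftrightarrow(2)$ argument and the preliminary observation that $\Ext^{\geq 1}_{A\otimes B}(X,A\otimes DB)=0$ follows from $_AX$ Gorenstein projective via Lemma~\ref{lem:adjoint} match the paper exactly.

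Where you diverge is in reaching $(3)$. You propose $(1)\Leftrightarrow(3)$ through Lemma~\ref{lem:hatdual}, passing to $X^\vee$ on the $(A\otimes B)^\op$-side, then a spectral-sequence/resolution argument to separate the $\Tor^B$ and $\Ext_A$ contributions, and finally checking the three Gorenstein-projectivity conditions for $DU\otimes_BX$ one by one. The paper instead proves $(2)\Leftrightarrow(3)$ directly and never invokes Lemma~\ref{lem:hatdual} or $X^\vee$ at all: apply Lemma~\ref{lem:adjoint} \emph{termwise} to $\xi$ (each $DI^i$ is a finite-dimensional projective $B^\op$-module, so the lemma applies with $P=DI^i$, $M=A$) to obtain an isomorphism of complexes $\Hom_{A\otimes B}(X,A\otimes\xi)\simeq\Hom_A(D\xi\otimes_BX,A)$. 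Now $D\xi\otimes_BX$ is a bounded sequence of finitely presented Gorenstein projective $A$-modules (each $DI^i\otimes_BX$ is a summand of a finite direct sum of copies of $_AX$), and since $D\xi$ is a deleted projective resolution of $DU$ over $B^\op$, the complex $D\xi\otimes_BX$ has cokernel $DU\otimes_BX$ and higher homology $\Tor^B_n(DU,X)$. Lemma~\ref{lem:excok} then gives the equivalence in one stroke: the $A$-dual sequence is exact iff the original is exact with Gorenstein projective cokernel. This bypasses your separate verification of the evaluation-map condition for $DU\otimes_BX$, which in your sketch is the least developed step. Your route is plausible but substantially longer; the detour through $X^\vee$ and the $(A\otimes B)^\op$-side is unnecessary, since everything can be done on the $A$-side using only Lemma~\ref{lem:adjoint} and Lemma~\ref{lem:excok}.
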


\begin{proof}
Since $_AX$  is Gorenstein projective, we have
$\Ext^n_{A\otimes B}(X,A\otimes DB)=0$  for  $n\geq 1$.
If $U$ is injective,
it is easy to see that $(1)$, $(2)$ and $(3)$ all hold.
Then we may assume that  $U$ has injective dimension $m\geq 1$.

``$(1) \Longleftrightarrow (2)$''
We prove this by induction on $m$.

If $m=1$, then ${d^0}:I^0 \to I^1$ is surjective.
Apply $\Hom_{A\otimes B}(X,A\otimes -)$ to the short exact sequence
\[0\lto U \lto I^0\overset{d^0}\lto I^1 \lto 0.\]
Then the long exact sequence yields
\[\Ext^1_{A\otimes B}(X,A\otimes U)=\Coker\Hom_{A\otimes B}(X,A\otimes d^0)\] and
$\Ext^n_{A\otimes B}(X,A\otimes U)=0$  for $n\geq 2$.
Therefore,  $\Hom_{A\otimes B}(X,A\otimes \xi)$ is exact if and only if
$\Ext^n_{A\otimes B}(X,A\otimes U)=0$  for $n\geq 1$.

Suppose that $(1)$ and $(2)$ are equivalent for $m=k\geq1$.
If the injective dimension of $U$ is $k+1$,
then the injective dimension of $V=\Coker d^0$ is $k$.
There is a surjective map $p:I^0 \to V$ and
an exact sequence of finite-dimensional $B$-modules
\[\xi': I^1 \overset{d^1}\lto I^2 \lto \cdots \lto  I^m  \lto 0\]
such that each $I^i$ is injective and $\Ker{d^1}=V$.
Observe that $\Hom_{A\otimes B}(X,A\otimes \xi)$ is exact
if and only if $\Hom_{A\otimes B}(X,A\otimes p)$ is surjective and
$\Hom_{A\otimes B}(X,A\otimes \xi')$ is exact.

Apply $\Hom_{A\otimes B}(X,A\otimes -)$ to the short exact sequence
\[0\lto U \lto I^0\overset{p}\lto V \lto 0.\]
It follows from the long exact sequence that we have
\[\Ext^1_{A\otimes B}(X,A\otimes U)=\Coker\Hom_{A\otimes B}(X,A\otimes p)\]
and
\[\Ext^{n+1}_{A\otimes B}(X,A\otimes U)=\Ext^n_{A\otimes B}(X,A\otimes V)  \;\mbox{ for } n\geq 1.\]
By induction hypothesis, we know that
$\Hom_{A\otimes B}(X,A\otimes \xi')$ is exact if and only if
$\Ext^n_{A\otimes B}(X,A\otimes V)=0$  for $n\geq 1$.
Therefore,
$\Hom_{A\otimes B}(X,A\otimes \xi)$ is exact if and only if
$\Ext^n_{A\otimes B}(X,A\otimes U)=0$  for $n\geq 1$.

``$(2) \Longleftrightarrow (3)$''
It follows from Lemma \ref{lem:adjoint} that
$\Hom_{A\otimes B}(X,A\otimes \xi)$ is isomorphic to
$\Hom_A(D\xi\otimes_B X,A)$.
In particular,
$\Hom_{A\otimes B}(X,A\otimes \xi)$ is exact
if and only if $\Hom_A(D\xi\otimes_B X,A)$ is exact.
Since $\xi$ is a deleted  injective resolution of $U$,
we have that $D\xi$ is a deleted projective resolution of $DU$.
Observe that $D\xi\otimes_B X$ is a sequence of finitely presented Gorenstein projective $A$-modules.
By Lemma \ref{lem:excok}, $\Hom_A(D\xi\otimes_B X,A)$ is exact
if and only if $D\xi\otimes_B X$ is exact
and $\Coker (D(d^0)\otimes_B X)$ is a Gorenstein projective $A$-module.
A direct calculation shows $\Coker (D(d^0)\otimes_B X) =DU\otimes_BX$
and the homology $H_n(D\xi\otimes_B X)=\Tor^B_n(DU,X)$
for $n\geq 1$.
This finishes our proof.
\end{proof}

Let $A$ be a coherent $k$-algebra
and $B$ be a finite-dimensional $k$-algebra.
For a finitely presented Gorenstein projective $A\otimes B$-module $X$,
is $_AX$ a Gorenstein projective $A$-module?
The following lemma gives an affirmative answer
if the projective dimension of $_BDB$ is finite.
Here, we recall that the right perpendicular category  $A\otimes B\-\Gproj^{\perp}$
of $A\otimes B\-\Gproj$ contains
all finitely presented $A\otimes B$-modules of finite projective dimension.

\begin{lem} \label{lem:gprojperp}
Let $X$ be a finitely presented Gorenstein projective $A\otimes B$-module and
$P^\bullet$ be a complete resolution of $X$ in $A\-\proj$.
Assume that the projective dimension of $_BDB$ is finite.
Then the following statements hold.
\begin{enumerate}
  \item $\Ext^n_{A\otimes B}(X,A\otimes DB)= 0$ for $n\geq 1$.
  \item $\Hom_{A\otimes B}(P^\bullet,A\otimes DB)$ is acyclic.
  \item  $_AX$ is a Gorenstein projective $A$-module.
\end{enumerate}
\end{lem}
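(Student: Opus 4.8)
The plan is to first pin down the projective dimension of $A\otimes DB$ over $A\otimes B$ and then bootstrap everything from the given complete resolution $P^\bullet$, whose terms are finitely generated projective $A\otimes B$-modules. If $0\to Q_m\to\cdots\to Q_0\to DB\to 0$ is a projective resolution of ${}_BDB$ by finitely generated projective $B$-modules, then applying the exact functor $A\otimes-$ yields an exact sequence $0\to A\otimes Q_m\to\cdots\to A\otimes Q_0\to A\otimes DB\to 0$ whose terms are finitely generated projective $A\otimes B$-modules; hence $A\otimes DB$ is a finitely presented $A\otimes B$-module whose projective dimension is at most that of ${}_BDB$, and so it lies in $A\otimes B\-\Gproj^{\perp}$. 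Since $X\in A\otimes B\-\Gproj$, part~(1) is then immediate from the definition of the right perpendicular category.

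For part~(2) I would argue by d\'evissage along a finite projective resolution $0\to F_d\to\cdots\to F_0\to A\otimes DB\to 0$ by finitely generated projective $A\otimes B$-modules, which is available by the previous paragraph together with coherence of $A\otimes B$. Writing $\Omega^0=A\otimes DB$ and $\Omega^{i}=\Ker(F_{i-1}\to F_{i-2})$, so that $\Omega^d=F_d$ is projective, one gets short exact sequences $0\to\Omega^{i+1}\to F_i\to\Omega^i\to 0$. Applying $\Hom_{A\otimes B}(P^\bullet,-)$ to each is exact termwise because every term of $P^\bullet$ is projective, so one obtains short exact sequences of complexes $0\to\Hom_{A\otimes B}(P^\bullet,\Omega^{i+1})\to\Hom_{A\otimes B}(P^\bullet,F_i)\to\Hom_{A\otimes B}(P^\bullet,\Omega^i)\to 0$. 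Both $\Hom_{A\otimes B}(P^\bullet,F_i)$ and $\Hom_{A\otimes B}(P^\bullet,\Omega^d)$ are acyclic because $P^\bullet$ is totally acyclic and those modules are projective. A descending induction on $i$ using the resulting long exact homology sequences then shows $\Hom_{A\otimes B}(P^\bullet,\Omega^i)$ is acyclic for every $i$, and the case $i=0$ is exactly part~(2).

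Finally, for part~(3) I would transport $P^\bullet$ down to $A$. Since $B$ is finite-dimensional over $k$, the algebra $A\otimes B$ is free of finite rank as an $A$-module, so ${}_AP^\bullet$ is an acyclic complex of finitely generated projective $A$-modules, and $X=\Ker d^1$ gives ${}_AX=\Ker{}_Ad^1$, which is finitely presented. It remains to check that $\Hom_A({}_AP^\bullet,A)$ is acyclic. Applying Lemma~\ref{lem:adjoint} with $M=A$ and $P$ the regular $B$-module $B$ (so that $X\otimes_BP\cong{}_AX$), and using that the isomorphism there is natural in $X$, identifies the complex $\Hom_A({}_AP^\bullet,A)$ with $\Hom_{A\otimes B}(P^\bullet,A\otimes DB)$, which is acyclic by~(2). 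Hence ${}_AP^\bullet$ is totally acyclic, and ${}_AX$ is Gorenstein projective.

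I expect the crux to be part~(2): one needs the fact — standard, but not purely formal — that $\Hom_{A\otimes B}(P^\bullet,-)$ sends a module of finite projective dimension to an acyclic complex, and the d\'evissage must be arranged so that the projectivity of the terms of $P^\bullet$ is invoked precisely where the short exact sequences of complexes are formed. The remaining ingredients are bookkeeping: flatness of $A$ over $k$ for the dimension estimate, and the $A\otimes B$-versus-$A$ adjunction of Lemma~\ref{lem:adjoint} for the transport in part~(3).
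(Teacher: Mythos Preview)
Your proof is correct. Parts (1) and (3) match the paper's argument essentially line for line: finite projective dimension of $_BDB$ forces $A\otimes DB$ to have finite projective dimension over $A\otimes B$ and hence to lie in $A\otimes B\-\Gproj^{\perp}$, and the adjunction of Lemma~\ref{lem:adjoint} identifies $\Hom_A({}_AP^\bullet,A)$ with $\Hom_{A\otimes B}(P^\bullet,A\otimes DB)$.

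For part~(2), however, you take a genuinely different route. The paper does not d\'evissage along a projective resolution of $A\otimes DB$; instead it observes that every cocycle of $P^\bullet$ is again a finitely presented Gorenstein projective $A\otimes B$-module, so by part~(1) each such cocycle $Z$ satisfies $\Ext^1_{A\otimes B}(Z,A\otimes DB)=0$, and breaking $P^\bullet$ into short exact sequences at the cocycles immediately gives acyclicity of $\Hom_{A\otimes B}(P^\bullet,A\otimes DB)$. This makes the logical chain $(1)\Rightarrow(2)\Rightarrow(3)$ tighter and avoids any induction. Your argument, by contrast, proves the stronger (and self-contained) fact that $\Hom_{A\otimes B}(P^\bullet,-)$ sends any module of finite projective dimension to an acyclic complex, using only total acyclicity of $P^\bullet$ and not part~(1) at all; this is a perfectly good alternative, slightly longer, but with the advantage that your~(2) stands independently of~(1).
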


\begin{proof}
$(1)$ Since the projective dimension of $_BDB$ is finite,
 the projective dimension of  $_{A\otimes B}A\otimes DB$ is also finite.
Then $A\otimes DB$ belongs to $A\otimes B\-\Gproj^\perp$.
Therefore, we have
$\Ext^n_{A\otimes B}(X,A\otimes DB)= 0$ for $n\geq 1$.

$(2)$ Since each cocycle of $P^\bullet$
is a finitely presented  Gorenstein projective $A\otimes B$-module,
it follows from $(1)$ that $\Hom_{A\otimes B}(P^\bullet,A\otimes DB)$ is acyclic.

$(3)$ Observe that  $_AP^\bullet$ is a complex of finitely generated projective $A$-modules.
By Lemma \ref{lem:adjoint}, we infer that $\Hom_A(P^\bullet,A)$ is isomorphic to
$\Hom_{A\otimes B}(P^\bullet,A\otimes DB)$, which is acyclic by (2).
Then  $_AP^\bullet$ is a complete resolution of $_AX$.
Therefore, $_AX$ is a Gorenstein projective $A$-module.
\end{proof}

Let $A$ be a coherent $k$-algebra.
We recall that the category of finitely presented Gorenstein projective
$A$-modules is closed under direct summands,
extensions and kernels of epimorphisms.
Recall that
a finite-dimensional $k$-algebra $B$ is called
a {\em Gorenstein algebra} if the injective dimensions
of $_BB$ and $B_B$ are both finite.
In this case,
the projective dimension of $_BDB$ is finite.
Recall from \cite[Proposition~3.10]{Bel2005} that
a $B$-module $M$ is Gorenstein projective
if and only if $\Tor^B_n(DB,M)=0$ for $n\geq 1$.

\begin{thm} \label{thm-mainthm}
Let $A$ be a coherent $k$-algebra and
$B$ be a finite-dimensional Gorenstein $k$-algebra.
Let $X$ be a finitely presented $A\otimes B$-module.
Then the following statements are equivalent.
\begin{enumerate}
  \item $X$ is a Gorenstein projective $A\otimes B$-module.
  \item $_AX$ is a Gorenstein projective $A$-module and $\Ext^{n}_{A\otimes B}(X,A\otimes B)=0$ for $n\geq 1$.
  \item $_A(DB\otimes_B X)$ is a Gorenstein projective $A$-module and $_BX$ is a Gorenstein projective $B$-module.
\end{enumerate}
\end{thm}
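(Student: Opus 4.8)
My plan is to prove the four implications $(1)\Rightarrow(2)$, $(2)\Rightarrow(3)$, $(3)\Rightarrow(2)$ and $(2)\Rightarrow(1)$, the work being concentrated in the last two. For $(1)\Rightarrow(2)$: the algebra $A\otimes B$ is coherent and $A\otimes B$ is a projective $A\otimes B$-module, so a complete resolution of $X$ in $(A\otimes B)\-\proj$ at once gives $\Ext^n_{A\otimes B}(X,A\otimes B)=0$ for $n\geq1$; and since $B$ is Gorenstein the projective dimension of $_BDB$ is finite, so Lemma~\ref{lem:gprojperp}(3) yields that $_AX$ is Gorenstein projective over $A$. For $(2)\Rightarrow(3)$: $_BB$ has finite injective dimension, so I apply Lemma~\ref{lem:key} with $U=B$; the two conditions in $(2)$ are exactly condition $(1)$ of that lemma, hence condition $(3)$ holds, i.e. $_A(DB\otimes_B X)$ is Gorenstein projective over $A$ and $\Tor^B_n(DB,X)=0$ for $n\geq1$, and by \cite[Proposition~3.10]{Bel2005} the latter says $_BX$ is Gorenstein projective over $B$.

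For $(3)\Rightarrow(2)$ the decisive point is to show that $_AX$ is Gorenstein projective over $A$; once that is known, Lemma~\ref{lem:key} with $U=B$ applies (its hypothesis now being available) and its condition $(3)$ --- which is precisely $(3)$ together with \cite[Proposition~3.10]{Bel2005} --- forces $\Ext^n_{A\otimes B}(X,A\otimes B)=0$ for $n\geq1$, i.e. $(2)$. To prove that $_AX\in A\-\Gproj$, fix a complete resolution $P^\bullet$ of $_BX$ by finitely generated projective $B$-modules. Since $\Tor^B_n(DB,-)$ vanishes on all cocycles of $P^\bullet$ and $DB\otimes_B-$ sends finitely generated projectives to injective $B$-modules, $DB\otimes_B P^\bullet$ is a totally acyclic complex of injective $B$-modules, and applying $\Hom_B(DB,-)$ returns a complex isomorphic to $P^\bullet$ because the unit of the adjunction between $DB\otimes_B-$ and $\Hom_B(DB,-)$ is an isomorphism on projectives. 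It follows that $\Ext^n_B(DB,DB\otimes_B M)=0$ for $n\geq1$ whenever $M$ is Gorenstein projective over $B$, and that the unit $\eta_X\colon X\to\Hom_B(DB,DB\otimes_B X)$ is an isomorphism of $A\otimes B$-modules. Now take a finite projective resolution $0\to P_d\to\cdots\to P_0\to DB\to0$ of $_BDB$ by finitely generated projective $B$-modules and apply $\Hom_B(-,DB\otimes_B X)$; the vanishing just noted makes the resulting sequence
\[0\lto X\lto\Hom_B(P_0,DB\otimes_B X)\lto\cdots\lto\Hom_B(P_d,DB\otimes_B X)\lto0\]
exact, and it is a sequence of left $A$-modules in which each $\Hom_B(P_i,DB\otimes_B X)$ is a direct summand of a finite direct power of the $A$-module $DB\otimes_B X$, hence Gorenstein projective over $A$ by $(3)$ and (GP2). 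Since $A\-\Gproj$ is closed under direct summands and under kernels of epimorphisms, splitting this finite exact sequence into short exact sequences and working inductively from the right-hand end shows $_AX\in A\-\Gproj$.

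For $(2)\Rightarrow(1)$ I would construct a complete resolution of $X$ in $(A\otimes B)\-\proj$. Its left half, a projective resolution $\cdots\to Q^{-1}\to Q^0\to X\to0$ by finitely generated projective $A\otimes B$-modules, is harmless: each $_AQ^i$ and $_BQ^i$ is projective, so every syzygy $\Omega^iX$ still satisfies $_A\Omega^iX\in A\-\Gproj$, $_B\Omega^iX\in B\-\Gproj$ and $\Ext^n_{A\otimes B}(\Omega^iX,A\otimes B)=0$ for $n\geq1$ (using $(2)$, $(3)$ and (GP2)), so $\Hom_{A\otimes B}(-,A\otimes B)$ keeps this half exact. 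The obstacle is the right half: one must embed $X$ into a finitely generated projective $A\otimes B$-module with a cokernel of the same type while keeping $\Hom_{A\otimes B}(-,A\otimes B)$ exact --- equivalently, one must verify that $\ev^{A\otimes B}_X\colon X\to X^{**}$ is an isomorphism and that $\Ext^n_{(A\otimes B)^\op}(X^*,A\otimes B)=0$ for $n\geq1$, where $X^*=\Hom_{A\otimes B}(X,A\otimes B)$. The $\Ext$-vanishing is within reach of the same ideas: Lemma~\ref{lem:hatdual} (with $U=B$) identifies $X^*$ with $\Hom_{(A\otimes B)^\op}(A\otimes DB,X^\vee)$ and, via $(2)$, shows $\Ext^n_{(A\otimes B)^\op}(A\otimes DB,X^\vee)=0$ for $n\geq1$; since $X^\vee\simeq\Hom_A({}_AX,A)$ is Gorenstein projective over $A^\op$ by Lemma~\ref{lem:adjoint} and (GP3), resolving $A\otimes DB$ over $(A\otimes B)^\op$ (where its projective dimension is finite because $B$ is Gorenstein) and repeating the inductive argument of the previous paragraph give $_{A^\op}X^*\in A^\op\-\Gproj$, after which Lemma~\ref{lem:key} for $(A\otimes B)^\op$ with $U=B^\op$ yields the vanishing. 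The genuinely hard point --- the one I expect to be the crux of the theorem --- is the isomorphism $\ev^{A\otimes B}_X$: it must be obtained from the reflexivity already in hand (as $_AX\in A\-\Gproj$, the module $X$ is reflexive for the $A\otimes DB$-duality $(-)^\vee$) by transporting that reflexivity across the passage from the coefficient bimodule $A\otimes DB$ to $A\otimes B$, a transport that is legitimate precisely because $B$ is Gorenstein, i.e. because $\mathrm{pd}({}_BDB)<\infty$ and $\mathrm{id}({}_BB)<\infty$ allow one to move freely between $A\otimes B$- and $A\otimes DB$-coefficients in the relevant $\Ext$- and double-dual computations. Granting this, splicing the two halves produces a complete resolution of $X$, so $X\in(A\otimes B)\-\Gproj$ and the cycle closes.
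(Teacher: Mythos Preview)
Your treatment of $(1)\Rightarrow(2)$, $(2)\Rightarrow(3)$ and $(3)\Rightarrow(2)$ is correct and agrees with the paper. Your argument for $(3)\Rightarrow(2)$ is in fact more explicit than the paper's rather terse sentence ``Observe that $_A(DB\otimes_B X)$ is a Gorenstein projective $A$-module and $B_B$ has finite injective dimension. Then $_AX$ is a Gorenstein projective $A$-module'', but the underlying mechanism---build a finite exact sequence with $X$ at one end and Gorenstein projective $A$-modules elsewhere, then use (GP2)---is the same.

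The genuine gap is in $(2)\Rightarrow(1)$. First, your claim that ``Lemma~\ref{lem:key} for $(A\otimes B)^\op$ with $U=B^\op$ yields the vanishing'' of $\Ext^n_{(A\otimes B)^\op}(X^*,A\otimes B)$ is incomplete: having $_{A^\op}X^*\in A^\op\-\Gproj$ supplies only the \emph{hypothesis} of Lemma~\ref{lem:key}, not one of its equivalent conditions (2) or (3), so you cannot conclude its condition~(1) from what you have established. Second, and more importantly, you yourself flag the reflexivity $\ev^{A\otimes B}_X\colon X\to X^{**}$ as ``the genuinely hard point'' and do not prove it. The paper closes both gaps simultaneously via tilting theory: since $B$ is Gorenstein, $A\otimes DB$ is a tilting $(A\otimes B)^\op$-module in Miyashita's sense, and since $\Ext^n_{(A\otimes B)^\op}(A\otimes DB,X^\vee)=0$ for $n\geq1$ (by Lemma~\ref{lem:hatdual} with $U=B$ together with~(2)), \cite[Theorem~1.16 and Proposition~1.20]{Miy1986} give natural isomorphisms
\[
\Ext^n_{(A\otimes B)^\op}(X^\vee,A\otimes DB)\;\simeq\;\Ext^n_{(A\otimes B)^\op}\bigl(\Hom(A\otimes DB,X^\vee),\End(A\otimes DB)\bigr)
\]
for all $n\geq0$. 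Under the identifications $\Hom_{(A\otimes B)^\op}(A\otimes DB,X^\vee)\simeq X^*$ and $\End_{(A\otimes B)^\op}(A\otimes DB)\simeq A\otimes B$, the cases $n\geq1$ give precisely $\Ext^n_{(A\otimes B)^\op}(X^*,A\otimes B)=0$, and the case $n=0$, composed with the isomorphism $\ev^{A\otimes DB}_X\colon X\to X^{\vee\vee}$ already in hand, is exactly $\ev^{A\otimes B}_X$. In other words, the ``transport from $A\otimes DB$-coefficients to $A\otimes B$-coefficients'' you anticipated is Miyashita's tilting equivalence; that is the missing ingredient in your proposal.
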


\begin{proof}
``$(1)\Longrightarrow (2)$"
Since  $X$ is a Gorenstein projective $A\otimes B$-module,
we have $\Ext^n_{A\otimes B}(X,A\otimes B)=0$ for $n\geq 1$.
Then $_AX$ is a Gorenstein projective $A$-module by Lemma \ref{lem:gprojperp}.

``$(2)\Longrightarrow (1)$"
Since $B$ is a Gorenstein algebra,
it is easy to see that $A\otimes DB$ is a tilting $(A\otimes B)^\op$-module
in the sense of \cite{CPS1986,Miy1986}.
Since $\Ext^{n}_{A\otimes B}(X,A\otimes B)=0$ for   $n\geq 1$,
we have $\Ext^n_{(A\otimes B)^\op}(A\otimes DB, X^{\vee})=0$ for $n\geq 1$ by Lemma \ref{lem:hatdual}.
Here, we recall that $(-)^{\vee}$ denotes the $A\otimes DB$-dual functors.

It follows from \cite[Theorem~1.16]{Miy1986}
and \cite[Proposition~1.20]{Miy1986} that for each $n\geq 0$,
there is an isomorphism
\begin{equation*}
\begin{split}
    &\Ext^n_{(A\otimes B)^\op}(X^{\vee},A\otimes DB) \\
    \simeq & \Ext^n_{(A\otimes B)^\op}(\Hom_{(A\otimes B)^\op}(A\otimes DB,X^{\vee}),
    \Hom_{(A\otimes B)^\op}(A\otimes DB,A\otimes DB)).
\end{split}
\end{equation*}
Then for each $n\geq 1$, we have
\begin{equation*}
\begin{split}
    &\Ext^n_{(A\otimes B)^\op}(\Hom_{A\otimes B}(X,A\otimes B),A\otimes B) \\
    \simeq & \Ext^n_{(A\otimes B)^\op}(\Hom_{A\otimes B}(X,A\otimes B),
    \Hom_{A\otimes B}(A\otimes B,A\otimes B)) \\
    \simeq & \Ext^n_{(A\otimes B)^\op}(\Hom_{(A\otimes B)^\op}(A\otimes DB,X^{\vee}),
    \Hom_{(A\otimes B)^\op}(A\otimes DB,A\otimes DB)) \\
    \simeq & \Ext^n_{(A\otimes B)^\op}(X^{\vee},A\otimes DB) \\
    =  &0.
\end{split}
\end{equation*}

Observe that the composite of $X \to X^{\vee\vee}$ with the series of isomorphisms
\begin{equation*}
\begin{split}
     X^{\vee\vee}=      & \Hom_{(A\otimes B)^\op}(X^{\vee},A\otimes DB) \\
    \simeq & \Hom_{(A\otimes B)^\op}(\Hom_{(A\otimes B)^\op}(A\otimes DB,X^{\vee}),
    \Hom_{(A\otimes B)^\op}(A\otimes DB,A\otimes DB)) \\
    \simeq & \Hom_{(A\otimes B)^\op}(\Hom_{A\otimes B}(X,A\otimes B),
    \Hom_{A\otimes B}(A\otimes B,A\otimes B)) \\
    \simeq & \Hom_{(A\otimes B)^\op}(\Hom_{A\otimes B}(X,A\otimes B),A\otimes B) \\
\end{split}
\end{equation*}
is an isomorphism.
It is easy to see that the composite  is
the evaluation map of $X$ with value in $A\otimes B$.
Therefore,
$X$ is a Gorenstein projective $A\otimes B$-module.

``$(2)\Longrightarrow(3)$"
Since $_AX$ is a Gorenstein projective $A$-module
and $\Ext^{n}_{A\otimes B}(X,A\otimes B)=0$ for $n\geq 1$,
we infer that $_A(DB\otimes_B X)$ is a Gorenstein projective $A$-module
and $\Tor^B_n(DB,M)=0$ for $n\geq 1$ by Lemma \ref{lem:key}.
Then $_BX$ is a Gorenstein projective $B$-module.

``$(3)\Longrightarrow(2)$"
Since $_BX$ is Gorenstein projective,
we have $\Tor^B_n(DB,M)=0$ for  $n\geq 1$.
Observe that $_A(DB\otimes_B X)$ is a Gorenstein projective $A$-module and
$B_B$ has finite injective dimension.
Then  $_AX$ is a Gorenstein projective $A$-module.
It follows from Lemma \ref{lem:key} that
$\Ext^{n}_{A\otimes B}(X,A\otimes B)=0$ for $n\geq 1$.
\end{proof}

Recall that a finite-dimensional  $k$-algebra $B$ is called
a {\em self-injective} algebra if
$_BB$ is injective or, equivalently, $_BDB$ is projective.
The following corollary is also a special case of \cite[Theorem~3.6]{Che2013}.

\begin{cor} \label{cor:mainthm}
Let $A$ be a coherent $k$-algebra
and  $B$ be a finite-dimensional self-injective $k$-algebra.
Then a finitely presented $A\otimes B$-module $X$ is Gorenstein projective
if and only if $_AX$ is a Gorenstein projective $A$-module.
\end{cor}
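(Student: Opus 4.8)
The plan is to derive the corollary from Theorem~\ref{thm-mainthm}, using that a finite-dimensional self-injective algebra is in particular a Gorenstein algebra (with the injective dimensions of $_BB$ and $B_B$ both equal to $0$). For the ``only if'' direction there is nothing new to prove: if $X$ is Gorenstein projective over $A\otimes B$, then the implication $(1)\Rightarrow(2)$ of Theorem~\ref{thm-mainthm} --- equivalently, Lemma~\ref{lem:gprojperp}$(3)$, since the projective dimension of $_BDB$ is zero --- already yields that $_AX$ is a Gorenstein projective $A$-module.

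For the ``if'' direction I would invoke the implication $(3)\Rightarrow(1)$ of Theorem~\ref{thm-mainthm}. Assuming $_AX$ is Gorenstein projective over $A$, it then suffices to verify the two conditions appearing in item~$(3)$: that $_A(DB\otimes_B X)$ is a Gorenstein projective $A$-module and that $_BX$ is a Gorenstein projective $B$-module. Both follow from the single fact that $DB$ is a projective, hence flat, right $B$-module --- the right-handed form of the characterisation ``$_BDB$ is projective'' of self-injectivity, which is available because self-injectivity of a finite-dimensional algebra is left--right symmetric. First, since $DB$ is a finitely generated projective right $B$-module, it is a direct summand of $B^n$ as a right $B$-module for some $n\geq1$; tensoring with $X$ over $B$ shows that $DB\otimes_B X$ is a direct summand of $B^n\otimes_B X\cong X^n$ as left $A$-modules, so $_A(DB\otimes_B X)$ is a direct summand of $(_AX)^n$ and is therefore Gorenstein projective by (GP2). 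Second, since $DB$ is flat as a right $B$-module, $\Tor^B_n(DB,X)=0$ for all $n\geq1$, so $_BX$ is a Gorenstein projective $B$-module by \cite[Proposition~3.10]{Bel2005}. Now Theorem~\ref{thm-mainthm} applies and $X$ is a Gorenstein projective $A\otimes B$-module.

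I do not expect a genuine obstacle: the argument is essentially a matter of specialising the earlier results, the only point deserving comment being the left--right symmetry of self-injectivity invoked above. One could alternatively carry the ``if'' direction through condition~$(2)$ of Theorem~\ref{thm-mainthm}: from $_AX$ Gorenstein projective one has $\Ext^n_{A\otimes B}(X,A\otimes DB)=0$ for $n\geq1$, and since $_BDB$ is a \emph{generator} of the category of finitely presented $B$-modules (as well as projective), $A\otimes B$ is a direct summand of a finite direct sum of copies of $A\otimes DB$, which forces $\Ext^n_{A\otimes B}(X,A\otimes B)=0$ for $n\geq1$. That route, however, requires the slightly more substantial observation that $_BDB$ is a generator, so I would favour the argument via condition~$(3)$.
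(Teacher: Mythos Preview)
Your proof is correct and follows essentially the same route as the paper: the ``only if'' direction via Lemma~\ref{lem:gprojperp} and the ``if'' direction via condition~$(3)$ of Theorem~\ref{thm-mainthm}, using projectivity of $DB$ to see that $_A(DB\otimes_B X)$ is a summand of a power of $_AX$. The only (inessential) difference is that, for the Gorenstein projectivity of $_BX$, the paper simply observes that every module over a self-injective algebra is Gorenstein projective, rather than verifying the Tor criterion.
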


\begin{proof}
``$\Longrightarrow$" This follows from Lemma \ref{lem:gprojperp}.

``$\Longleftarrow$" Since $_BDB$ is a projective $B$-module
and $_AX$ is a Gorenstein projective $A$-module,
we infer that $_A(DB\otimes_B X)$ is a Gorenstein projective $A$-module.
Observe that all $B$-modules are Gorenstein projective.
Then it follows from Theorem \ref{thm-mainthm}
that $X$ is a Gorenstein projective  $A\otimes B$-module.
\end{proof}

\section{The proof of Proposition 1.2} \label{sec:4}
Throughout this section,
$A$ is a coherent $k$-algebra and
$B$ is a finite-dimensional $k$-algebra of finite global dimension.
Let $M$ be a $B$-module.
Recall that the radical $\rad_BM$ of $M$ is the intersection of all maximal submodules of $M$.
In particular,
the radical $\rad B$ of $_BB$ is an ideal of $B$.
Recall that $D$ denotes the $k$-dual functor.

\begin{lem} \label{lem:fingldim}
Let $X$ be a finitely presented $A\otimes B$-module.
Then the following statements are equivalent.
\begin{enumerate}
  \item $\Ext^n_{A\otimes B}(X,A\otimes B)=0$ for $n\geq 1$.
  \item $\Ext^n_{A\otimes B}(X,A\otimes S)=0$ for each simple $B$-module $S$ and $n\geq 1$.
  \item $\Ext^n_{A\otimes B}(X,A\otimes D(B/\rad B))=0$ for $n\geq 1$.
\end{enumerate}
\end{lem}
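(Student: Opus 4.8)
The plan is to prove the three conditions are equivalent by establishing a cycle, using the fact that $B$ has finite global dimension to control projective and injective resolutions of finite-dimensional $B$-modules.

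First I would observe that $(1)\Rightarrow(2)$: any simple $B$-module $S$ has a finite projective resolution by finitely generated projective $B$-modules, hence $A\otimes S$ has a finite resolution by modules of the form $A\otimes P$ with $P\in B\-\proj$; each such module is a direct summand of a finite direct sum of copies of $A\otimes B$, so the vanishing of $\Ext^n_{A\otimes B}(X,A\otimes B)$ for $n\geq 1$ propagates along this resolution by dimension shifting to give $\Ext^n_{A\otimes B}(X,A\otimes S)=0$ for $n\geq 1$.

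Next, for $(2)\Rightarrow(3)$: the module $D(B/\rad B)$ is a finite-dimensional $B$-module; since $B/\rad B$ is semisimple as a right $B$-module, $D(B/\rad B)$ is a (finite) direct sum of simple left $B$-modules, so $A\otimes D(B/\rad B)$ is a finite direct sum of modules $A\otimes S$, and the conclusion is immediate. For $(3)\Rightarrow(1)$ I would run an induction on the injective dimension of $_BB$ (which is finite, since $B$ has finite global dimension). The point is that $D(B/\rad B)$ is, up to a projective $B$-module summand, the socle situation dual to the top: more directly, I would use the exact sequence of finite-dimensional $B$-modules obtained by applying $D$ to a minimal projective resolution of the right module $B_B$, whose terms involve only the summands of $B/\rad B$. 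Then $D(B_B)={}_BDB$ has a finite coresolution whose terms are sums of copies of $D(B/\rad B)$, so $(3)$ gives $\Ext^n_{A\otimes B}(X,A\otimes DB)=0$ for $n\geq 1$ by dimension shifting. Finally, since $B$ has finite global dimension, $_BB$ has finite injective dimension, so $B$ has a finite injective coresolution; dualizing, $_BDB$ has a finite projective resolution, so $_BB$ (being $D$ of $DB$) relates to $_BDB$, and one deduces the vanishing of $\Ext^n_{A\otimes B}(X,A\otimes B)$ from that of $\Ext^n_{A\otimes B}(X,A\otimes DB)$ via the finite injective coresolution of $_BB$ by injective $B$-modules, each of which is a summand of a sum of copies of $DB$.

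The main obstacle I anticipate is bookkeeping the passage back and forth between $A\otimes B$ and $A\otimes DB$: one must be careful that applying $A\otimes(-)$ to an exact sequence of finite-dimensional $B$-modules stays exact (it does, since $k$ is a field so $A\otimes(-)$ is exact) and that $\Ext^n_{A\otimes B}(X,A\otimes(-))$ behaves like a cohomological functor in the second variable along these short exact sequences, which is standard. The truly delicate part is making precise that $_BDB$ admits a finite coresolution built from copies of $D(B/\rad B)$ and a finite resolution built from copies of injective $B$-modules, each expressible in terms of $DB$; this is where the finite global dimension hypothesis is essential, guaranteeing both resolutions terminate so that the dimension-shifting argument closes after finitely many steps.
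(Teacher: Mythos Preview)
Your $(1)\Rightarrow(2)$ and $(2)\Rightarrow(3)$ are correct and essentially match the paper's argument. The problem is your $(3)\Rightarrow(1)$. The claimed ``finite coresolution of $_BDB$ whose terms are sums of copies of $D(B/\rad B)$'' does not exist: $D(B/\rad B)$ is semisimple, and a non-semisimple module cannot embed as the kernel of a map between semisimple modules (its socle would have to coincide with the whole module). Likewise, ``applying $D$ to a minimal projective resolution of the right module $B_B$'' produces nothing useful, since $B_B$ is already projective and its minimal resolution is just $B_B$ itself. The second half of your detour, passing from $\Ext^n_{A\otimes B}(X,A\otimes DB)=0$ to $\Ext^n_{A\otimes B}(X,A\otimes B)=0$ via a finite injective coresolution of $_BB$, is fine; but you never reach the starting point $A\otimes DB$.

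The paper's route is much shorter and avoids $DB$ altogether. Let $\X\subseteq B\-\mod$ be the full subcategory of modules $U$ with $\Ext^n_{A\otimes B}(X,A\otimes U)=0$ for all $n\geq 1$; the long exact sequence shows $\X$ is closed under direct summands, extensions, and cokernels of monomorphisms. Then $(2)\Leftrightarrow(3)$ because $D(B/\rad B)$ is semisimple and contains every simple left $B$-module as a summand, and $(2)\Rightarrow(1)$ because $_BB$ has finite length and hence lies in the extension closure of the simple $B$-modules. No resolution or coresolution of $DB$ is needed, and the finite global dimension hypothesis enters only in $(1)\Rightarrow(2)$. If you want to keep your cycle $(1)\Rightarrow(2)\Rightarrow(3)\Rightarrow(1)$, replace your $(3)\Rightarrow(1)$ by the composite $(3)\Rightarrow(2)\Rightarrow(1)$ just described; the step $(3)\Rightarrow(2)$ is the same direct-summand observation you already used for $(2)\Rightarrow(3)$.
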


\begin{proof}
Let $\X$ be the full subcategory
of $B\-\mod$ formed by objects $U$ which satisfies
$\Ext^n_{A\otimes B}(X,A\otimes U)=0$ for $n\geq 1$.
Then $\X$ is closed under direct summands,
extensions and cokernels of monomorphisms in $B\-\mod$.

``$(1) \Longrightarrow (2)$''
Observe that simple $B$-modules have finite projective dimensions.
Since $B$ belongs to $\X$,
each simple $B$-module $S$ belongs to $\X$.

``$(2) \Longrightarrow (1)$''
Since $_BB$ has finite length and each simple $B$-module $S$ belongs to $\X$,
we infer that $B$ belongs to $\X$.

``$(2) \Longleftrightarrow (3)$''
Since $_BD(B/\rad B)$ is a finite-dimensional semisimple $B$-module
and each simple $B$-module $S$ is a direct summand of $_BD(B/\rad B)$,
it follows that $D(B/\rad B)$ belongs to $\X$
if and only if each simple $B$-module $S$ belongs to $\X$.
\end{proof}

Let $X$ be an $A\otimes B$-module.
Then $\rad B\cdot X=\rad_BX$ is an $A\otimes B$-submodule of $X$;
see \cite[Chapter~4, Corollary~15.21]{AF1974}.
Observe that $(B/\rad B)\otimes_BX$ is isomorphic to $X/(\rad B\cdot X)$.
Then  $X/\rad_BX \simeq (B/\rad B)\otimes_BX$ as $A\otimes B$-modules.

\begin{prop} \label{prop:fingldim}
Let $A$ be a coherent $k$-algebra and
$B$ be a finite-dimensional $k$-algebra of finite global dimension.
Let $X$ be a finitely presented $A\otimes B$-module.
Then $X$ is Gorenstein projective
if and only if $_A(X/\rad_B X)$ is a Gorenstein projective $A$-module
and $_BX$ is a  projective $B$-module.
\end{prop}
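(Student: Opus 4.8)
The plan is to derive Proposition~\ref{prop:fingldim} from Theorem~\ref{thm-mainthm} by exploiting the special features available when $B$ has finite global dimension: namely, every $B$-module is Gorenstein projective if and only if it is projective, and the injective $B$-module $DB$ is $B$-projective up to a syzygy. First I would record the identification $X/\rad_BX \simeq (B/\rad B)\otimes_B X$ noted just before the statement, so that the $A$-module appearing in the proposition is a tensor product, bringing Lemma~\ref{lem:key} into play. The key observation is that over an algebra of finite global dimension a Gorenstein projective module is exactly a projective module; hence for $_BX$ the conditions ``Gorenstein projective'' and ``projective'' coincide, and Theorem~\ref{thm-mainthm}(3) already yields that $X$ is Gorenstein projective if and only if $_A(DB\otimes_B X)$ is Gorenstein projective over $A$ and $_BX$ is projective over $B$. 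So the whole task reduces to replacing the $A$-module $DB\otimes_B X$ by $(B/\rad B)\otimes_B X = X/\rad_BX$, \emph{under the standing assumption that $_BX$ is projective}.

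To make that replacement I would argue as follows. Assume $_BX$ is projective; write $_BX \simeq B^{(I)}$ (a possibly infinite direct sum, since $_BX$ need not be finitely generated), so that $DB\otimes_B X \simeq (DB)^{(I)}$ and $(B/\rad B)\otimes_B X \simeq (B/\rad B)^{(I)}$ as $A$-modules, both compatibly with the $A$-action coming from $X$. Now $A\-\Gproj$ is closed under arbitrary direct sums of copies of a fixed module only in a limited sense, so instead I would work at the level of finite-dimensional $B$-modules and use Lemma~\ref{lem:fingldim} together with Lemma~\ref{lem:key}. Concretely: since $_BX$ is projective, Lemma~\ref{lem:key} applied with $U = D(B/\rad B)$ (which has finite injective dimension because $B$ has finite global dimension, so $DB$ has finite injective dimension and so does its semisimple ``bottom'') shows that $\Ext^n_{A\otimes B}(X, A\otimes D(B/\rad B)) = 0$ for all $n\ge 1$ is equivalent to ``$_A(D D(B/\rad B) \otimes_B X) = {}_A((B/\rad B)\otimes_B X)$ is Gorenstein projective over $A$, and $\Tor^B_n(D D(B/\rad B), X)=0$''; the Tor-vanishing is automatic as $_BX$ is projective. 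By Lemma~\ref{lem:fingldim} the Ext-vanishing against $A\otimes D(B/\rad B)$ is equivalent to the Ext-vanishing against $A\otimes B$, which by Theorem~\ref{thm-mainthm}(2) (together with $_AX$ Gorenstein projective, itself forced by $_A(DB\otimes_B X)$ or $_A(X/\rad_BX)$ being Gorenstein projective and $B$ having finite injective dimension) is equivalent to $X$ being Gorenstein projective.

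Assembling the two directions: if $X$ is Gorenstein projective, Theorem~\ref{thm-mainthm} gives $_BX$ Gorenstein projective, hence projective since $\gldim B < \infty$, and then the chain above gives $_A(X/\rad_BX)$ Gorenstein projective over $A$. Conversely, if $_BX$ is projective and $_A(X/\rad_BX)$ is Gorenstein projective, then first $_AX$ is Gorenstein projective over $A$ (here I would reuse the argument in the proof of ``$(3)\Rightarrow(2)$'' of Theorem~\ref{thm-mainthm}, which only needed $_A(DB\otimes_B X)$ Gorenstein projective and $B_B$ of finite injective dimension — and $DB\otimes_B X$ is a direct sum of copies of $DB$, hence has the same relation to $X/\rad_BX$ through the projective filtration, so a dimension-shift along the radical filtration of $B$ does the job), and then Lemma~\ref{lem:fingldim} plus Lemma~\ref{lem:key} give $\Ext^n_{A\otimes B}(X, A\otimes B)=0$ for $n\ge1$, so $X$ is Gorenstein projective by Theorem~\ref{thm-mainthm}(2). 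The main obstacle I anticipate is the bookkeeping in passing between $DB\otimes_B X$ and $X/\rad_BX$ as $A$-modules: they are not literally isomorphic, only linked through the (finite, since $\gldim B<\infty$) radical filtration $B \supset \rad B \supset \rad^2 B \supset \cdots$, whose successive quotients are semisimple; one must check that each step of this filtration, after tensoring with the projective module $_BX$, stays inside $A\-\Gproj$ using closure under extensions and kernels of epimorphisms (GP2), and that Lemma~\ref{lem:key}'s hypothesis ``$U$ of finite injective dimension'' is genuinely available for $U = D(B/\rad B)$ and for $U = DB$ — both of which follow from $\gldim B < \infty$. Everything else is a formal combination of the lemmas already proved.
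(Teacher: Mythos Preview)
Your proposal is correct and follows essentially the same path as the paper: both directions combine Theorem~\ref{thm-mainthm}, Lemma~\ref{lem:fingldim}, and Lemma~\ref{lem:key} applied with $U = D(B/\rad B)$, and in the converse direction both obtain that $_AX$ is Gorenstein projective by filtering $B_B$ (radical filtration, equivalently a composition series), tensoring with the projective module $_BX$, and invoking (GP2). The one muddled passage is your attempt to route this last step through the $(3)\Rightarrow(2)$ argument of Theorem~\ref{thm-mainthm} and $DB\otimes_B X$: at that point you do not know $_A(DB\otimes_B X)$ is Gorenstein projective, so that argument cannot be reused literally; but as you yourself note, the radical-filtration argument (each simple right $B$-module $S$ is a summand of $B/\rad B$, hence $S\otimes_B X$ is a summand of $X/\rad_B X$, and then extensions give $_AX$) handles it directly---and this is precisely what the paper does.
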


\begin{proof}
``$\Longrightarrow$"
Since $X$ is a Gorenstein projective $A\otimes B$-module,
it follows from Theorem \ref{thm-mainthm} and Lemma \ref{lem:fingldim}
that $_AX$ is a  Gorenstein projective $A$-module
and $\Ext^n_{A\otimes B}(X,A\otimes D(B/\rad B))=0$
for $n\geq 1$.

Since  $_AX$ is Gorenstein projective
and the  injective dimension of $_BD(B/\rad B)$ is finite,
$_A(X/\rad_B X)$ is a Gorenstein projective $A$-module
and $\Tor^B_n(B/\rad B,X)=0$ for $n\geq 1$ by Lemma \ref{lem:key}.
Then $_BX$ is flat.
Since $B$ is a finite-dimensional $k$-algebra,
it follows that $_BX$ is a projective $B$-module.

``$\Longleftarrow$"
Since $_BX$  projective,
we have $\Tor^B_n(S,X)=0$ for  each simple $B^\op$-modules $S$ and $n\geq 1$.
Since $(B/\rad B)\otimes X$ is a Gorenstein projective $A$-module,
$S\otimes _BX$ is a Gorenstein projective $A$-module
for each simple $B^\op$-modules $S$.
Since $B_B$ has finite length,
we infer that $_AX$ is a Gorenstein projective $A$-module.

Since $_BX$ is projective,
$\Tor^B_n(B/\rad B,X)=0$ for $n\geq 1$.
Observe that $_AX$ and $_A(X/\rad_B X)$ are Gorenstein projective  $A$-modules,
and  $_BD(B/\rad B)$ has finite injective dimension.
By Lemma \ref{lem:key}, we have $\Ext^n_{A\otimes B}(X,A\otimes D(B/\rad B))=0$
for $n\geq 1$.
Then it follows from Theorem \ref{thm-mainthm}
and Lemma \ref{lem:fingldim} that
$X$ is  a Gorenstein projective  $A\otimes B$-module.
\end{proof}

\begin{exm} \label{ex:quiver}
Let $Q=(Q_0,Q_1)$ be a finite quiver.
Here, $Q_0$ is the set of vertices and $Q_1$ is the set of arrows of the quiver $Q$;
see \cite[Chapter~III, \S1]{ARS1995}.

Let $I$ be an admissible ideal of the path algebra $kQ$ such that
$B=kQ/I$ be a finite-dimensional $k$-algebra of finite global dimension.
We denote by $e_i$ the trivial path, $S(i)$ the simple $B$-module and $I(i)$
the indecomposable injective $B$-module
at the vertex $i\in Q_0$.

Let $A$ be a coherent $k$-algebra
and $X$ be a finitely presented $A\otimes B$-module.
Then for each $i\in Q_0$ there is a finitely presented $A$-module $X_i=e_iX$;
and for each $\alpha\in Q_1$ there an $A$-module map $X_\alpha:X_{s(\alpha)} \to X_{t(\alpha)}$.
Here, $s(\alpha)$ is the staring vertex   of $\alpha$ and
$t(\alpha)$ is the terminating vertex of $\alpha$.

For each $\alpha\in Q_1$, let $f_i$ be the natural $A$-module map
\[f_i=(X_\alpha): \oplus_{\alpha\in Q_1,t(\alpha)=i}\;X_{s(\alpha)} \to X_i.\]
We claim that there is an isomorphism $DS(i)\otimes_B X\simeq \Coker f_i$ of $A$-modules.
Then it follows from Proposition \ref{prop:fingldim} that
$X$ is Gorenstein projective if and only if $_BX$ is a projective $B$-module
and $\Coker f_i$ is a Gorenstein projective $A$-modules for each $i\in Q_0$;
compare \cite{LZ2013,LZ2015}.

For the claim, we take the injective copresentation
\[0 \lto S(i)\lto  I(i) \overset{(h_\alpha)}\lto \oplus_{\alpha\in Q_1,t(\alpha)=i}\;I(s\alpha)\]
of the simple $B$-module $S(i)$,
where $h_\alpha$ is induced by multiplication of $\alpha$.
Applying the left exact functor $D(-)\otimes_BX$ to the above sequence,
we obtain an isomorphism $DS(i)\otimes_B X\simeq \Coker f_i$ of $A$-modules.
\end{exm}

\section{Applications} \label{sec:5}
Let $A$ be a finite-dimensional $k$-algebra.
Let $n\geq 1$ be a positive integer,
and denote by $[n]$ the set
$\{1,2,\cdots,n\}$ of positive integers less than or equal to $n$.
Recall from \cite[7.1]{PX1997} that an {\em $n$-periodic complex} of finite-dimensional $A$-modules
is a collection $(X_i,X_{\alpha_i},i\in[n])$,
where $X_i$ is a finite-dimensional $A$-module
and $X_{\alpha_i}$ is an $A$-module map
from $X_i$ to $X_{i+1}$ satisfying
$X_{\alpha_{(i+1)}}X_{\alpha_i}=0$ for each $i\in [n]$.
Here, we identify $n+1$ with $1$.
An {\em $n$-periodic chain map} is a collection $(f_i,i\in[n])$,
where $f_i:X_i\to Y_i$  is an $A$-module map satisfying
$f_{i+1}X_{\alpha_i}=Y_{\alpha_i}f_i$ for each $i\in [n]$.
Denote by $\bfC_n(A\-\mod)$ the category of $n$-periodic complexes of
finite-dimensional $A$-modules.

Let $\bbZ_n$ be the quiver with $n$ vertices and $n$ arrows
which forms an oriented cycle.
The vertex set of $\bbZ_n$ is identified with $[n]=\{1,2,\cdots,n\}$;
and there is a unique arrow $\alpha_i$ from $i$ to $i+1$ for each $i \in [n]$.
Let $B_n$ be the radical square zero $k$-algebra given by $\bbZ_n$.
Then $B_n$ is a self-injective Nakayama algebra.
Denote by $e_i$ the trivial path at $i\in[n]$.
Recall that $A\otimes B_n\-\mod$ denotes the category of finite-dimensional $A\otimes B_n$-modules.

There is an equivalence $R:A\otimes B_n\-\mod\to\bfC_n(A\-\mod)$
of abelian categories; compare \cite[Lemma 2.1]{LZ2013}.
The functor $R$ sends a module $X$ in $A\otimes B_n\-\mod$ to $(X_i,X_{\alpha_i},i\in[n])$,
where $X_i=e_iX$ and  $X_{\alpha_i}$ is induced by multiplication of $\alpha_i$.

Denote by $\bfC_n(A\-\proj)$ the category of $n$-periodic complexes of
finite-dimensional projective $A$-modules;
it is a Frobenius category \cite[Proposition 7.1]{PX1997}.
Denote by $\bfK_n(A\-\proj)$ the homotopy category of complexes in $\bfC_n(A\-\proj)$;
it is a triangulated category \cite{Hap1987}.
Note that a morphism in $\bfC_n(A\-\proj)$ is homotopic to zero
if and only if it factors through a contractible $n$-periodic complex of projective $A$-modules.

Recall that $A\otimes B_n\-\Gproj$ denotes the Frobenius category of
finite-dimensional Gorenstein projective $A\otimes B_n$-modules.
Denote by $A\otimes B_n\-\uGproj$ the stable category of $A\otimes B_n\-\Gproj$;
it is a triangulated category \cite{Hap1987}.
The stable category $A\otimes B_n\-\uGproj$
is obtained from $A\otimes B_n\-\Gproj$ by factoring out the ideal
of all maps which factor through projective $A\otimes B_n$-modules;
see \cite[Chapter~IV, \S1]{ARS1995}.

\begin{lem} \label{lem:percom}
Let $A$ be a finite-dimensional $k$-algebra of finite global dimension.
Then the functor $R$ induces equivalences:
\begin{enumerate}
  \item $\bfC_n(A\-\proj)\simeq A\otimes B_n\-\Gproj$ of Frobenius categories;
  \item $\bfK_n(A\-\proj)\simeq A\otimes B_n\-\uGproj$ of triangulated categories.
\end{enumerate}

\end{lem}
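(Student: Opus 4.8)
The plan is to deduce both equivalences from Corollary~\ref{cor:mainthm} together with the general machinery of stable categories of Frobenius categories. The key observation is that $B_n$ is self-injective, so Corollary~\ref{cor:mainthm} applies: a finite-dimensional $A\otimes B_n$-module $X$ is Gorenstein projective if and only if ${}_AX$ is a Gorenstein projective $A$-module. Since $A$ has finite global dimension, ${}_AX$ is Gorenstein projective if and only if ${}_AX$ is projective. Translating through the equivalence $R$, the module $X$ corresponds to the $n$-periodic complex $(X_i,X_{\alpha_i})$ with $X_i = e_iX$; here $X$ is Gorenstein projective precisely when each $X_i = {}_AX\cdot e_i$ is a projective $A$-module, i.e.\ precisely when $R(X)$ lies in $\bfC_n(A\-\proj)$. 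Hence $R$ restricts to an equivalence of exact categories $\bfC_n(A\-\proj)\simeq A\otimes B_n\-\Gproj$, and it remains to check that this equivalence respects the Frobenius structures.

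For (1), I would argue that both categories are Frobenius (by \cite[Proposition~7.1]{PX1997} and (GP1), respectively) and that an equivalence of exact categories automatically matches projective-injective objects and hence the classes of admissible short exact sequences. Concretely, the projective-injective objects of $\bfC_n(A\-\proj)$ are the contractible complexes of projectives, and the projective-injective objects of $A\otimes B_n\-\Gproj$ are the projective $A\otimes B_n$-modules; one checks directly that $R$ sends a projective $A\otimes B_n$-module $A\otimes B_ne_i$ to the standard contractible complex $\cdots\to {}_AAe_{i-1}\xto{\mathrm{id}} {}_AAe_{i-1}\to\cdots$ concentrated around position $i$ (using that $B_n$ is radical square zero), and conversely every contractible complex of projectives is a summand of a direct sum of such. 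So the Frobenius structures correspond under $R$.

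Part (2) is then formal: passing to stable categories, an exact equivalence of Frobenius categories sending projective-injectives to projective-injectives induces a triangulated equivalence of the associated stable categories \cite{Hap1987}. Since a morphism in $\bfC_n(A\-\proj)$ is null-homotopic if and only if it factors through a contractible complex, the stable category of $\bfC_n(A\-\proj)$ is precisely $\bfK_n(A\-\proj)$; and the stable category of $A\otimes B_n\-\Gproj$ is $A\otimes B_n\-\uGproj$ by definition. Therefore $R$ induces a triangulated equivalence $\bfK_n(A\-\proj)\simeq A\otimes B_n\-\uGproj$.

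The main obstacle I anticipate is the bookkeeping in identifying projective-injective objects on both sides and verifying that $R$ matches them — in particular writing down explicitly the image under $R$ of the indecomposable projective $A\otimes B_n$-modules and recognizing these as the contractible $n$-periodic complexes, so that the Frobenius structures (not merely the underlying exact structures) are seen to agree. Once that identification is in hand, everything else follows from Corollary~\ref{cor:mainthm}, the hypothesis $\mathrm{gl.dim}\,A<\infty$, and standard properties of stable categories.
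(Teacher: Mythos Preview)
Your approach is correct and essentially identical to the paper's: invoke Corollary~\ref{cor:mainthm} (using that $B_n$ is self-injective) together with $\mathrm{gl.dim}\,A<\infty$ to see that $R$ carries $A\otimes B_n\-\Gproj$ onto $\bfC_n(A\-\proj)$, then identify projective $A\otimes B_n$-modules with contractible $n$-periodic complexes and pass to stable categories. One small slip to fix: in your explicit description of $R(A\otimes B_ne_i)$ the nonzero components are copies of $A$ (not ``$Ae_{i-1}$'', which is meaningless since $e_{i-1}\in B_n$, not $A$) sitting in degrees $i$ and $i+1$.
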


\begin{proof}
$(1)$ Since $A$ has finite global dimension,
any Gorenstein projective $A$-module is projective.
By Corollary \ref{cor:mainthm},
a finite-dimensional $A\otimes B_n$-module $X$ is Gorenstein projective
if and only if $X_i$ is a projective $A$-module for each $i\in [n]$.
It is routine to check that $R$ preserves exact structures.
Then $R$ induces an equivalence $\bfC_n(A\-\proj) \simeq A\otimes B_n\-\Gproj$
of Frobenius categories.

$(2)$ Observe that a finite-dimensional $A\otimes B_n$-module $X$ is projective
if and only if $R(X)$ is a contractible $n$-periodic complex of projective $A$-modules.
Then it follows from $(1)$ that
$R$  induces an equivalence $\bfK_n(A\-\proj) \simeq A\otimes B_n\-\uGproj$
of triangulated categories.
\end{proof}

We now give a new proof of the main theorem in \cite{Zha2014}.

\begin{prop} \label{prop:percom}
Let $A$ and $A'$ be two finite-dimensional $k$-algebras of finite global dimension.
If $A$ and $A'$ are derived equivalent,
then $\bfK_n(A\-\proj)$ and $\bfK_n(A'\-\proj)$ are triangle equivalent.
\end{prop}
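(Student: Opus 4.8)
The plan is to reduce the statement to the fact that a derived equivalence between $A$ and $A'$ induces a triangle equivalence between the homotopy categories of $n$-periodic complexes of finitely generated projectives, and then to transport this along the equivalences of Lemma~\ref{lem:percom}(2). So I would argue: by Lemma~\ref{lem:percom}(2) there are triangle equivalences $\bfK_n(A\-\proj)\simeq A\otimes B_n\-\uGproj$ and $\bfK_n(A'\-\proj)\simeq A'\otimes B_n\-\uGproj$. Hence it suffices to produce a triangle equivalence $\bfK_n(A\-\proj)\simeq \bfK_n(A'\-\proj)$ directly, and this is where the derived equivalence hypothesis enters. (Alternatively one could phrase everything on the side of the algebras $A\otimes B_n$ and $A'\otimes B_n$, using that a derived equivalence between $A$ and $A'$ yields one between $A\otimes B_n$ and $A'\otimes B_n$, and that $A\otimes B_n\-\uGproj$ is the singularity category of $A\otimes B_n$ in the sense of Buchweitz, which is a derived invariant; but since $B_n$ is self-injective this route is clean as well.)

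The key step is the following: if $A$ and $A'$ are derived equivalent, then $\bfK_n(A\-\proj)$ and $\bfK_n(A'\-\proj)$ are triangle equivalent. By Rickard's theory a derived equivalence between two finite-dimensional algebras over a field is given by a two-sided tilting complex, equivalently it restricts to a triangle equivalence $\bfK^{\mathrm{b}}(A\-\proj)\simeq \bfK^{\mathrm{b}}(A'\-\proj)$ between the bounded homotopy categories of finitely generated projectives. One then observes that the category $\bfC_n(A\-\proj)$, and its homotopy category $\bfK_n(A\-\proj)$, can be built functorially from $\bfK^{\mathrm{b}}(A\-\proj)$ in a way that only uses its triangulated structure: an $n$-periodic complex of projectives is the same as a dg-module over the dg-algebra $A\otimes_k k\bbZ_n$ with the obvious differential, or more elementarily, $\bfK_n(A\-\proj)$ is the homotopy category of the Frobenius category $\bfC_n(A\-\proj)$, which is equivalent to the category of modules over the (self-injective) algebra $A\otimes B_n$ that are projective over $A$. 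Since a two-sided tilting complex $T$ for $(A,A')$ induces a two-sided tilting complex $T\otimes_k B_n$ for $(A\otimes B_n, A'\otimes B_n)$, the derived categories $\bfD^{\mathrm{b}}(A\otimes B_n\-\mod)$ and $\bfD^{\mathrm{b}}(A'\otimes B_n\-\mod)$ are equivalent; restricting such an equivalence to singularity categories (stable module categories, since $B_n$ is self-injective so $A\otimes B_n$ is Gorenstein) and invoking Lemma~\ref{lem:percom}(2) again gives the desired triangle equivalence $\bfK_n(A\-\proj)\simeq\bfK_n(A'\-\proj)$.

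The main obstacle, and the point requiring care, is the passage from an abstract derived equivalence to the claim that the induced functor respects the $n$-periodic (or self-injective tensor) structure. Concretely one must check that tensoring a two-sided tilting complex with $B_n$ over $k$ again yields a tilting complex --- this uses that $B_n$ is finite-dimensional and $k$-flat, so that $\Hom$ and $\Ext$ computations over $A\otimes B_n$ decompose via $B_n$ as in Lemma~\ref{lem:adjoint} --- and that the resulting derived equivalence of $A\otimes B_n$ and $A'\otimes B_n$ descends to the singularity categories, which holds because any derived equivalence sends $\bfK^{\mathrm{b}}(\proj)$ to $\bfK^{\mathrm{b}}(\proj)$ and hence induces an equivalence of Verdier quotients $\bfD^{\mathrm{b}}(\mod)/\bfK^{\mathrm{b}}(\proj)$. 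Once these two points are in place, the proof is a matter of assembling Lemma~\ref{lem:percom}(2) on both sides.
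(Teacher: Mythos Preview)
Your proposal is correct and follows essentially the same route as the paper: lift the derived equivalence from $A,A'$ to $A\otimes B_n,\,A'\otimes B_n$ via Rickard, pass to singularity categories (Verdier quotients $\bfD^\bo/\bfK^\bo$), identify these with $A\otimes B_n\-\uGproj$ via Buchweitz using that $A\otimes B_n$ is Gorenstein, and then apply Lemma~\ref{lem:percom}(2) on both sides. Two minor remarks: your first paragraph contains a logical slip --- after invoking Lemma~\ref{lem:percom}(2) the reduction should read ``it suffices to produce $A\otimes B_n\-\uGproj\simeq A'\otimes B_n\-\uGproj$'', not the original statement again --- and the parenthetical ``stable module categories'' is imprecise, since $A\otimes B_n$ is Gorenstein but not self-injective in general, so one must say stable category of Gorenstein projectives; otherwise the argument matches the paper's.
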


\begin{proof}
Since $A$ and $A'$ are derived equivalent,
it follows from \cite[Theorem 2.1]{Ric1991} that
$A\otimes B_n$ and  $A'\otimes B_n$ are also derived equivalent.
Then the triangulated quotient categories
$\bfD^\bo(A\otimes B_n\-\mod)/\bfK^\bo(A\otimes B_n\-\proj)$ and
$\bfD^\bo(A'\otimes B_n\-\mod)/\bfK^\bo(A'\otimes B_n\-\proj)$
are triangle equivalent by \cite[Theorem 1.1]{Ric1989b}.

Since $A$ has finite global dimension and $B_n$ is a self-injective algebra,
$A\otimes B_n$ is a  Gorenstein algebra.
By \cite[Theorem~4.4.1]{Buc1987}, the triangulated quotient category
$\bfD^\bo(A\otimes B_n\-\mod)/\bfK^\bo(A\otimes B_n\-\proj)$ and
$A\otimes B_n\-\uGproj$ are triangle equivalent.
It follows that $A\otimes B_n\-\uGproj$ and $A'\otimes B_n\-\uGproj$ are triangle equivalent.
Then by Lemma \ref{lem:percom}, we know that
$\bfK_n(A\-\proj)$ and $\bfK_n(A'\-\proj)$ are triangle equivalent.
\end{proof}

Let $A$ be a finite-dimensional $k$-algebra.
Denote by $A^\mathrm{e}=A\otimes A^\op$ the enveloping algebra of $A$.
For a finite-dimensional Gorenstein $k$-algebra $A$,
recall that a finite-dimensional  Gorenstein projective $A$-module
of finite projective dimension is projective; see, for example, \cite[Lemma~5.1.1]{Buc1987}.

\begin{prop} \label{prop:reggproj}
Let $A$ be a finite-dimensional Gorenstein $k$-algebra.
Then $A$ is a Gorenstein projective $A^\mathrm{e}$-module
if and only if $A$ is a self-injective algebra.
\end{prop}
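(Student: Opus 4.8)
The plan is to apply Corollary~\ref{cor:mainthm} with the roles of the two tensor factors played by $A$ itself and $A^\op$, so that $A\otimes A^\op = A^\mathrm{e}$ and the $A^\mathrm{e}$-module in question is $A$, viewed via left and right multiplication. The key observation is that the underlying left $A$-module ${}_AA$ of the $A^\mathrm{e}$-module $A$ is just $A$ regarded as a module over itself, which is always (finitely generated) projective and hence Gorenstein projective. So Corollary~\ref{cor:mainthm} would immediately give: if $A^\op$ is self-injective, then $A$ is a Gorenstein projective $A^\mathrm{e}$-module. Since $A$ is self-injective if and only if $A^\op$ is self-injective, this handles the ``if'' direction. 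Concretely, first I would note $A^\mathrm{e} = A\otimes A^\op$, with $A$ coherent (indeed finite-dimensional) and $A^\op$ finite-dimensional; then invoke the ``$\Longleftarrow$'' part of Corollary~\ref{cor:mainthm} with $B=A^\op$, using that ${}_AA$ is projective.

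For the ``only if'' direction, suppose $A$ is a Gorenstein projective $A^\mathrm{e}$-module. The strategy is to produce a Gorenstein projective $A$-module of finite projective dimension that detects self-injectivity, then invoke the cited fact \cite[Lemma~5.1.1]{Buc1987} that over a Gorenstein algebra such a module must be projective. The natural candidate comes from Theorem~\ref{thm-mainthm} (or Corollary~\ref{cor:mainthm}): applying the ``$\Longrightarrow$'' direction with $B=A^\op$ tells us that ${}_A(DA^\op\otimes_{A^\op}A)$ is a Gorenstein projective $A$-module, where $DA^\op = DA$ carries its natural $A$-$A$-bimodule structure. Now $DA\otimes_{A^\op}A \simeq DA$ as left $A$-modules, so this says that ${}_A(DA)$ is a Gorenstein projective $A$-module. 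But $A$ is a Gorenstein algebra, so ${}_ADA$ has finite projective dimension (the projective dimension of $DA$ over a Gorenstein algebra is finite — this is precisely the condition recalled in the excerpt just before Theorem~\ref{thm-mainthm}). A finitely generated Gorenstein projective module of finite projective dimension over a Gorenstein algebra is projective, so ${}_ADA$ is projective, i.e. ${}_AA$ is injective, which is the definition of $A$ being self-injective.

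I should double-check that Theorem~\ref{thm-mainthm} or Corollary~\ref{cor:mainthm} genuinely applies here. Corollary~\ref{cor:mainthm} as stated requires $B$ to be self-injective, which is what we are trying to prove, so for the ``only if'' direction I must instead use Theorem~\ref{thm-mainthm}, whose hypothesis is only that $B$ is a finite-dimensional \emph{Gorenstein} algebra — and $A^\op$ is Gorenstein because $A$ is. Condition~(3) of Theorem~\ref{thm-mainthm}, applied to $X=A$ as an $A\otimes A^\op$-module, yields that ${}_{A}(DA^\op\otimes_{A^\op}A)$ is Gorenstein projective over $A$, and after the identification $DA^\op\otimes_{A^\op}A\simeq DA$ this is exactly ${}_ADA$. (One should also note the companion statement that ${}_{A^\op}A$ is Gorenstein projective, but this is automatic since ${}_{A^\op}A$ is projective, and it is condition~(3) that carries the content.)

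The main obstacle I anticipate is keeping the bimodule bookkeeping straight: the identification $DA^\op\otimes_{A^\op}A\simeq DA$ as left $A$-modules, and the compatibility of the various left/right structures on $A$, $DA$, and $A^\op$, must be checked carefully, since ``$A$ as an $A^\mathrm{e}$-module'' involves the outer $A$-action on the left and the $A^\op$-action (right multiplication) on the right, and it is easy to transpose a left and a right action by mistake. Once that is pinned down, both directions are short: the ``if'' direction is a one-line application of Corollary~\ref{cor:mainthm}, and the ``only if'' direction combines Theorem~\ref{thm-mainthm} with the standard fact that finitely generated Gorenstein projective modules of finite projective dimension over a Gorenstein algebra are projective.
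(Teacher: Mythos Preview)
Your proof is correct and, for the ``only if'' direction, essentially identical to the paper's: both apply Theorem~\ref{thm-mainthm} with $B=A^\op$ to conclude that ${}_ADA$ is Gorenstein projective, then use that ${}_ADA$ has finite projective dimension over the Gorenstein algebra $A$ to force it to be projective.

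For the ``if'' direction you take a slightly different route. The paper invokes \cite[Proposition~2.2]{AR1991} to deduce that $A^\mathrm{e}$ is itself self-injective whenever $A$ is, so that \emph{every} $A^\mathrm{e}$-module is Gorenstein projective. You instead apply Corollary~\ref{cor:mainthm} with $B=A^\op$ directly, using only that ${}_AA$ is projective. Your approach is more self-contained (it stays entirely within the paper's results and avoids the external citation), while the paper's approach gives the stronger intermediate conclusion that $A^\mathrm{e}$ is self-injective. Both arguments are short and valid.
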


\begin{proof}
``$\Longrightarrow$'' Since $A$ is a finite-dimensional Gorenstein projective $A^\mathrm{e}$-module,
it follows from Theorem \ref{thm-mainthm} that $_ADA$ is a Gorenstein projective $A$-module.
Since $_ADA$ has finite projective dimension,
we know that $_ADA$ is projective.
Then $A$ is a self-injective algebra.

``$\Longleftarrow$'' Since $A$ is a self-injective algebra,
$A^\mathrm{e}$ is a self-injective algebra by \cite[Proposition~2.2]{AR1991}.
Then all $A^\mathrm{e}$-modules are Gorenstein projective.
Therefore, $A$ is a Gorenstein projective $A^\mathrm{e}$-module.
\end{proof}

\section*{Acknowledgments}
The author is thankful to Professors Xiao-Wu~Chen,
Yu~Ye and Guodong~Zhou
for numerous inspiring discussions.
The author is supported by China Postdoctoral Science Foundation (No. 2015M581563)
and by STCSM (No. 13DZ2260400).


\begin{thebibliography}{10}

\bibitem{AF1974}
{\sc F.~W. Anderson and K.~R. Fuller}, {\em Rings and categories of modules}, vol.~13 of
  Graduate Texts in Mathematics, Springer-Verlag,
  New York-Heidelberg, 1974.

\bibitem{AB1969}
{\sc M.~Auslander and M.~Bridger}, {\em Stable module theory}, vol.~94 of
  Memoirs of the American Mathematical Society, American Mathematical Society,
  Providence, R. I., 1969.

\bibitem{AR1991}
{\sc M.~Auslander and I.~Reiten}, {\em Cohen-{M}acaulay and {G}orenstein {A}rtin
  algebras}, in Representation theory of finite groups and finite-dimensional
  algebras, vol.~95 of Progress in Mathematics, Birkh{\"a}user, Basel, 1991,
  pp.~221--245.


\bibitem{ARS1995}
{\sc M.~Auslander, I.~Reiten, and S.~O. Smal{\o}}, {\em Representation theory
  of {A}rtin algebras}, vol.~36 of Cambridge Studies in Advanced Mathematics,
  Cambridge University Press, Cambridge, 1995.

\bibitem{Bel2005}
{\sc A.~Beligiannis}, {\em Cohen-{M}acaulay modules, (co)torsion pairs and
  virtually {G}orenstein algebras}, J. Algebra, 288 (2005), pp.~137--211.

\bibitem{Bir1935}
{\sc G.~Birkhoff}, {\em Subgroups of {A}belian {G}roups}, Proc. London Math.
  Soc., S2-38 (1935), pp.~385--401.

\bibitem{Buc1987}
{\sc R.-O. Buchweitz}, {\em Maximal {C}ohen-{M}acaulay modules and
  {T}ate-cohomology over {G}orenstein rings}.
\newblock Unpublished Manuscript, Available at:
  http://hdl.handle.net/1807/16682, 1987.

\bibitem{CE1956}
{\sc H.~Cartan and S.~Eilenberg}, {\em Homological algebra}, Princeton
  University Press, Princeton, N. J., 1956.

\bibitem{Che2013}
{\sc X.-W. Chen}, {\em Totally reflexive extensions and modules}, J. Algebra,
  379 (2013), pp.~322--332.

\bibitem{CPS1986}
{\sc E.~Cline, B.~Parshall, and L.~Scott}, {\em Derived categories and {M}orita
  theory}, J. Algebra, 104 (1986), pp.~397--409.

\bibitem{EJ1995}
{\sc E.~E. Enochs and O.~M.~G. Jenda}, {\em Gorenstein injective and projective
  modules}, Math. Z., 220 (1995), pp.~611--633.

\bibitem{EJ2000}
{\sc E.~E. Enochs and O.~M.~G. Jenda}, {\em Relative homological
  algebra}, vol.~30 of de Gruyter Expositions in Mathematics, Walter de Gruyter
  \& Co., Berlin, 2000.

\bibitem{Hap1987}
{\sc D.~Happel}, {\em On the derived category of a finite-dimensional algebra},
  Comment. Math. Helv., 62 (1987), pp.~339--389.




\bibitem{LZ2013}
{\sc X.-H. Luo and P.~Zhang}, {\em Monic representations and
  {G}orenstein-projective modules}, Pacific J. Math., 264 (2013), pp.~163--194.

\bibitem{LZ2015}
{\sc X.-H. Luo and P.~Zhang}, {\em Monic monomial
  representations {I} {G}orenstein-projective modules}, arXiv:1510.05124.

\bibitem{Miy1986}
{\sc Y.~Miyashita}, {\em Tilting modules of finite projective dimension}, Math.
  Z, 193 (1986), pp.~113--146.

\bibitem{PX1997}
{\sc L.~Peng and J.~Xiao}, {\em Root categories and simple {L}ie algebras}, J.
  Algebra, 198 (1997), pp.~19--56.

\bibitem{Ric1989b}
{\sc J.~Rickard}, {\em Derived categories and stable equivalence}, J. Pure
  Appl. Algebra, 61 (1989), pp.~303--317.

\bibitem{Ric1991}
{\sc J.~Rickard}, {\em Derived equivalences
  as derived functors}, J. London Math. Soc. (2), 43 (1991), pp.~37--48.

\bibitem{RS2006}
{\sc C.~M. Ringel and M.~Schmidmeier}, {\em Submodule categories of wild
  representation type}, J. Pure Appl. Algebra, 205 (2006), pp.~412--422.

\bibitem{Zha2014}
{\sc X.~Zhao}, {\em A note on the equivalence of {$m$}-periodic derived
  categories}, Sci. China Math., 57 (2014), pp.~2329--2334.

\end{thebibliography}
\end{document}